\documentclass[11pt]{article}
\usepackage{amsmath,amsfonts,amssymb,latexsym,amsthm,dsfont}
\usepackage[a4paper,margin=2.5cm]{geometry}
\usepackage{graphicx}
\usepackage{booktabs}
\usepackage{hyperref}
\usepackage[utf8]{inputenc}


\usepackage{pgffor}
\usepackage{tikzexternal}
\tikzexternalize

\usepackage{enumitem}


\newtheorem{thm}{Theorem}[section]%
\newtheorem{lem}[thm]{Lemma}%

\newtheorem{defi}[thm]{Definition}%
\newtheorem{ass}[thm]{Assumption}%
\newtheorem{rem}[thm]{Remark}%
\newtheorem{conj}[thm]{Conjecture}%

\newcommand{\DeclareAlphabet}[2]{
  \foreach \x in {A,B,...,Z}{%
    \expandafter\xdef
    \csname #1\x\endcsname{%
      \noexpand#2{\x}}%
  }
}

\DeclareAlphabet{d}{\mathbb}  
\DeclareAlphabet{c}{\mathcal}
\DeclareAlphabet{r}{\mathrm}
\DeclareAlphabet{b}{\mathbf}

 
\newcommand{\ABS}[1]{{{\left| #1 \right|}}} 

\newcommand{\BRA}[1]{{{\left\{#1\right\}}}} 
\newcommand{\PAR}[1]{{{\left(#1\right)}}} 
\newcommand{\SBRA}[1]{{{\left[#1\right]}}} 
\renewcommand{\leq}{\leqslant}
\renewcommand{\geq}{\geqslant}



\newcommand{\esp}[1]{\mathbb{E}\left[#1\right]}
\newcommand{\var}[1]{\mathrm{Var}\left(#1\right)}

\newcommand{\cvxleq}{\leq_{\mathrm{cvx}}}


\title{On the persistence regime for Lotka-Volterra in randomly fluctuating
environments}
\author{Florent Malrieu, Pierre-André Zitt} %

\date{\today}

\begin{document}

\maketitle
\begin{abstract}
  In this note, we study the long time behavior of Lotka-Volterra systems whose
  coefficients vary randomly. Benaïm and Lobry (2015) recently established that
  randomly switching between two environments that are both favorable to the
  same species may lead to different regimes: extinction of one species or the
  other, or persistence of both species.  Our purpose here is to describe more
  accurately the range of parameters leading to  these regimes, and the support
  of the invariant probability measure in case of persistence. 
\end{abstract}

\section{Introduction}
\subsection{The model}
The aim of the present paper is to study the ergodicity of a piecewise
deterministic Markov process (PDMP) linked to Lotka-Volterra type dynamics.
These lines can be seen as a companion paper to~\cite{lotka} since we go one
step further in the description of different regimes of the process and the support of the
invariant measures. Let us  first provide  an overview of the 
main results in~\cite{lotka} before stating our contribution.

For a given set of positive parameters $\mathcal{E}=(a,b,c,d,\alpha,\beta)$,
consider the Lotka-Volterra differential system in $\dR_+^2$, given by 
\begin{equation}
  \label{eq:lv}
  \begin{cases}
  \dot{x}=\alpha x (1-ax-by) \\
  \dot{y}=\beta y (1-cx-dy) \\
  (x_0,y_0)\in\dR_+^2. 
  \end{cases}
\end{equation}
This system modelizes the evolution of the populations of two species ($x$ of
type~$\textbf{x}$ and $y$ of type~$\textbf{y}$).  The populations
grow logistically --- as encoded by the $\alpha x(1-ax)$ and $\beta y(1-dy)$ terms --- and
compete with each other, which  gives rise to the cross terms $\alpha b xy$
and $\beta c xy$.  We denote by $F_\mathcal{E}$ the associated  vector
field: $(\dot x,\dot y)=F_{\mathcal{E}}(x,y)$. In the sequel, the variable $z$
stands for $(x,y)$ and we will sometimes write $F_{\mathcal{E}}(z)$ instead of
$F_{\mathcal{E}}(x,y)$. This ODE system, taken alone,  is easy to analyze. 
In particular it has only a finite number of equilibrium points, towards which 
the dynamics converges. These equilibria may be on the coordinate axes --- meaning
that one of the species gets extinct --- or inside the positive quadrant. The 
position and nature of the equilibria turn out to depend only on the signs of
$c-a$ and $d-b$. A complete picture  will be given in Section~\ref{sec:notation};
let us note already that when $a<c$ and $b<d$,
the point $(1/a,0)$ attracts any path starting in $(0,+\infty)^2$. We say 
in this case that
the environnement is \emph{favorable to species $\textbf{x}$}\,; it leads to the
extinction of species $\textbf{y}$.    

Consider now two such systems, labeled $0$ and $1$, with environments 
$\mathcal{E}_i=(a_i,b_i,c_i,d_i,\alpha_i,\beta_i)_{i=0,1}$. We make the following
standing assumption: 
\begin{ass} The environments $\cE_0$ and $\cE_1$ are both 
  favorable to $\textbf{x}$: $a_i<c_i$ and $b_i<d_i$ for $i=0,1$. 
  In particular, taken alone, each system leads to the extinction of the second
  species, that is, $y_t$ converges to $0$ and $\limsup x_t>0$ as soon as
  $x_0>0$.  
\end{ass}

Finally, introduce the random process obtained by switching between 
these two deterministic dynamics, at rates $(\lambda_i)_{i=0,1}$. More precisely, 
we consider the process $(Z,I)$ on $\dR^2\times\BRA{0,1}$ driven by the 
infinitesimal generator 
\[
Lf(z,i)=F_{\mathcal{E}_i}(z)\cdot\nabla_z f(z,i) +\lambda_i (f(z,1-i)-f(z,i)).  
\]
In other words, $I$ jumps from $i$ to $1-i$ after a random time with an exponential distribution 
of parameter $\lambda_i$, and while $I_t$ is equal to $i$,
$Z$ evolves deterministically by $\dot Z_t=F_{\mathcal{E}_i}(Z_t)$. The coordinates 
of $Z_t$ are denoted by $X_t$ and $Y_t$. We refer to~\cite{lotka} for a detailed biological 
motivation.  

It is shown in \cite{lotka} that, depending on the environments $\cE_0$, $\cE_1$ 
and the jump rates $\lambda_0$, $\lambda_1$, 
one of the following four things occur almost surely: 
\begin{itemize}
\item extinction of species $\textbf{x}$: $X_t \to 0$ and $\limsup Y_t>0$,
\item extinction of species \textbf{y}: $Y_t\to 0$ and $\limsup X_t>0$, 
\item extinction of one of the two species,  chosen randomly,
\item persistence: the empirical occupation measure (and, in many cases, the
 distribution) of ${(X_t,Y_t)}_{t\geq 0}$ 
converges to a probability measure on $(0,+\infty)^2$ that is absolutely continuous with 
respect to the Lebesgue measure. 
\end{itemize}
Moreover, one or more of these regimes may occur when the jump 
rates $(\lambda_0,\lambda_1)$ vary, the environments $(\mathcal{E}_0,\mathcal{E}_1)$ 
being fixed. Similar surprising behaviors for switched processes have
been studied for linear ODEs in \cite{BLMZ1,mattingly}.

\subsection{The frequent jumps asymptotics and the averaged vector field}
Recall that $\lambda_0$, $\lambda_1$ are the jump rates from one environment
to the other. 
Note that the index process $(I_t)_{t\geq 0}$ is Markov by itself, 
and its invariant measure is a Bernoulli distribution with parameter 
$\lambda_0/(\lambda_0+\lambda_1)$. As a consequence, 
it will be convenient to choose the alternative
parametrization
\begin{equation}\label{eq:para_st}
(s,t)\in [0,1]\times(0,+\infty)\mapsto (st,(1-s)t) 
\end{equation}
for the jump rates, that is, let $t$ be the sum $\lambda_0+\lambda_1$ 
and $s$ be the ratio $\lambda_0/(\lambda_0+\lambda_1)$. 
\begin{rem}[Length of interjump times I]\label{rem:st}
Notice that the expectations of the interjump times are given by $(st)^{-1}$ and $((1-s)t)^{-1}$. 
If $t$ is small, the jumps are rare and the jump times are large in average; as 
$t$ grows the jumps become more frequent and the jump times shorter on average. 
\end{rem}
As the parameter $t$ goes to infinity --- the frequent jumps asymptotics --- it
can be shown that the stochastic process ${(Z_t)}_{t\geq 0}$ converges to the 
solution of 
\[
\dot{z}_t=F_s(z_t) \quad\text{where}\quad 
F_s=(1-s)F_{\mathcal{E}_0}+sF_{\mathcal{E}_1}. 
\]
As noticed in \cite{lotka}, for any $s\in [0,1]$, the vector field 
$F_s$ is the Lotka-Volterra system associated to the "averaged" environment 
$\mathcal{E}_s=(a_s,b_s,c_s,d_s,\alpha_s,\beta_s)$ with 
\begin{align}
\alpha_s &= (1-s)\alpha_0+s\alpha_1,  
&
a_s &= \frac{(1-s)\alpha_0a_0+s\alpha_1a_1}{\alpha_s},
&
b_s &= \frac{(1-s)\alpha_0b_0+s\alpha_1b_1}{\alpha_s},\label{eq:def_alphas}
\\
\beta_s &= (1-s)\beta_0+s\beta_1,
&
c_s &= \frac{(1-s)\beta_0c_0+s\beta_1c_1}{\beta_s},
&
d_s &= \frac{(1-s)\beta_0d_0+s\beta_1d_1}{\beta_s}.\label{eq:def_betas}
\end{align}
Recall that by our standing assumption, $a_i<c_i$ and $b_i<d_i$ for $i=0,1$. 
The key point is that \emph{these inequalities may be reversed} 
for the averaged environment $\cE_s$; in some situations 
$\cE_s$ may even be \emph{unfavorable to species~$\textbf{x}\,$!} 
\begin{defi}[Critical parameter regions]
Two critical (possibly empty) parameter regions are defined in~\cite{lotka} by: 
\begin{equation}\label{eq:def-I}
I=(s_1,s_2)=\BRA{ s\in[0,1], a_s > c_s} 
\quad\text{and}\quad 
J=(s_3,s_4)=\BRA{ s\in[0,1], b_s > d_s}. 
 \end{equation}
\end{defi}
 The fact that $I$ and $J$ are indeed intervals is obvious from the definition
 of $a_s$, $b_s$, $c_s$ and $d_s$ by~\eqref{eq:def_alphas}
 and~\eqref{eq:def_betas}. As we have seen, the relevance of these
 intervals stems from the fact that they correspond to different types
 for the averaged environment $\cE_s$. For example, 
 the vector field $F_s$ always has two stationary points on the coordinate axes, 
 but their nature vary: 
\begin{itemize}
\item the stationary point $(1/a_s,0)$ is a well if $s\notin I$ and a saddle point if $s\in I$, 
\item the stationary point $(0,1/d_s)$ is a saddle point if $s\notin J$ and a well if $s\in J$.  
 \end{itemize}
\begin{rem}\label{rem:adiff}
Notice that if $a_0=a_1$ then the interval $I$ is empty. In the sequel we will focus on the 
case when $a_0\neq a_1$ and without loss of generality we will assume that $a_0<a_1$. 
 \end{rem}
More details and  a complete description of the phase portrait of the Lotka-Volterra system
 will be given below (see Section~\ref{sec:notation}). 

 \subsection{Invasion rates}

It is shown in~\cite{lotka} that the regime of the process $(Z,I)$ is determined by the 
the signs of two quantities, called \emph{invasion rates}, $\Lambda_\textbf{x}$ and 
$\Lambda_\textbf{y}$. These quantities are obtained by considering the 
system where one of the species, say $\textbf{y}$,  is almost extinct: $y\ll 1$. 
In this case, remark two things: 
\begin{itemize}
  \item if the current environment is $i$, the growth rate $\frac{\dot{y}}{y}$ 
    of species $\textbf{y}$ is approximately $\beta_i(1-c_ix)$, by \eqref{eq:lv}; 
  \item while $y$ stays negligible, the evolution of $(X,I)$ looks like 
the dynamics of the Markov process on $[0,\infty)\times\BRA{0,1}$ driven by the 
generator
\[
  L_{\textbf{x}}f(x,i)=\alpha_i x(1-a_ix) \partial_x f(x,i)+\lambda_i(f(x,1-i)-f(x,i)),
\]
which turns out to be ergodic with an invariant measure $\mu_\textbf{x}$ on $[0,\infty)\times\BRA{0,1}$ 
that depends on the jump rates $(\lambda_0,\lambda_1)=(st,(1-s)t)$.
\end{itemize}
Over a long period of time, the growth rate of $\textbf{y}$ is therefore
obtained by averaging $\beta_i(1-c_i x)$ with respect to the 
invariant measure $\mu_\textbf{x}$: this leads to the definition 
of the \emph{invasion rate} $\Lambda_\textbf{y}$ by
\[
\Lambda_\textbf{y}(s,t)=\int_{[0,\infty)\times\{0,1\}} \beta_i(1-c_ix)\mu_{\textbf{x}}(dx,di).
\]
When $\Lambda_\textbf{y}$ is positive (respectively negative) 
species $\textbf{y}$ tends to increase (respectively decrease) from low density. Similarly, 
one can define 
\[
\Lambda_\textbf{x}(s,t)=\int_{[0,\infty)\times\{0,1\}} \alpha_i(1-b_iy)\mu_\textbf{y}(dy,di). 
\]
where $\mu_\textbf{y}$ is the invariant probability measure of the stochastic process on 
$\dR\times\BRA{0,1}$ with generator 
\[
  L_{\textbf{y}}f(y,i)=\beta_i y(1-d_iy) \partial_ yf(y,i)+\lambda_i(f(y,1-i)-f(y,i)),
\]

The main result in~\cite{lotka} ensures that the long time behavior of the Markov 
process ($Z_t,I_t)$ is characterized by the sign of these invasion rates, as summed 
up in the following table (see Theorems~3.1, 3.3, 3.4 and 4.1 of \cite{lotka} for precise 
statements). 

\begin{center}
\begin{tabular}{ccc}
\toprule
  & $\Lambda_\textbf{y}>0$ & $\Lambda_{\textbf{y}}<0 $ \\
\hline 
$\Lambda_\textbf{x}>0$& persistence of the two species & extinction of species $\textbf{y}$\\
\hline
$\Lambda_\textbf{x}<0$ & extinction of species $\textbf{x}$ & random extinction of one of the two species \\
\bottomrule
\end{tabular}
\label{tab:signs_of_lambda12}
\end{center}

\subsection{Our contribution}
In view of the previous result, the study of the model is reduced to 
finding the sign of the invasion rates, depending on the parameters 
of the environment and on the jump rates. To state our results, 
we need to introduce a second parametrization for the jump rates 
$(\lambda_0,\lambda_1)\in (0,+\infty)^2$ slightly different from~\eqref{eq:para_st}:
\[
(u,v)\in [0,1]\times(0,+\infty) \mapsto (\alpha_0 uv, \alpha_1 (1-u)v)
\]
in such a way that 
\begin{equation}\label{eq:def_u_v}
u = \gamma_0/(\gamma_0+\gamma_1)
\quad\text{and}\quad 
v=\gamma_0+\gamma_1
\quad\text{where }\gamma_i = \lambda_i/\alpha_i
\quad\text{for } i=0,1.  
\end{equation}


The change of parameters $(u,v)=\xi(s,t)$ is triangular in the sense
that $u$ only depends on $s$: 
\[
(u,v)=\xi(s,t)=\PAR{
  \frac{s\alpha_1}{(1-s)\alpha_0 + s\alpha_1},
  \frac{t}{\alpha_0\alpha_1}( (1-s)\alpha_0 + s\alpha_1)
}.
\]
\begin{rem}[Length of interjump times II]
Notice that the new parameter $v$ is proportional to $t$ when $u$ (or $s$) is fixed. 
As a consequence, as in Remark \ref{rem:st}, the interjump times are short when $v$ 
is large and large when $v$ is small.   
\end{rem}

\begin{defi}[Reparametrized invasion rates]
The invasion rates in the $(u,v)$ coordinates are denoted by
\[
\tilde \Lambda_\emph{\textbf{x}}(u,v)=\Lambda_\emph{\textbf{x}}(\xi^{-1}(u,v))
\quad\text{and}\quad 
\tilde \Lambda_\emph{\textbf{y}}(u,v)=\Lambda_\emph{\textbf{y}}(\xi^{-1}(u,v)). 
\]
Similarly,  $\tilde{I}$ (resp. $\tilde{J}$) is the image of $I$ (resp. $J$) 
for the other parametrization. 
\end{defi}
Note that $\tilde{I}$ and $\tilde{J}$ still are 
(possibly empty) intervals.

\begin{rem}
  The parameter $u$ is already implicitly considered in~\cite{lotka}, where 
  it appears in the computations leading to the explicit conditions for the 
  non-emptyness of $I$ (which are equivalent to the positivity of a second 
  degree polynomial). 
\end{rem}

Our first result is an explicit formula for~$\tilde \Lambda_\textbf{y}$, 
suited both to fast numerical computations and theoretical study. 
\begin{lem}[Expression of $\tilde\Lambda_\textbf{y}$]
  \label{lem:exprLambda2}
  Assume that $a_0<a_1$ (see Remark~\ref{rem:adiff}). Let $(u,v)$ 
  given by~\eqref{eq:def_u_v}. The quantity $\tilde \Lambda_\emph{\textbf{y}}$ can 
  be rewritten as:
  \[
    \tilde\Lambda_\emph{\textbf{y}}(u,v) 
    = \frac{1}{(a_1-a_0) \PAR{\frac{1}{\alpha_0} (1-u) + \frac{1}{\alpha_1} u}}
    \esp{\phi(U_{u,v})},
  \]
  where $\phi:[0,1]\to \dR$ is defined by
  \[
    \phi(y) = \frac{1}{a_0+(a_1-a_0) y}P(a_0+(a_1-a_0) y)
  \]
  for some explicit second degree polynomial $P$, and 
  $U_{u,v}$ is a Beta distributed $\mathrm{Beta}\PAR{uv,(1-u)v}$ random variable. 
    
  Moreover, $\phi$ has the following properties: 
  \begin{itemize}
      \item If $\tilde{I}=\emptyset$ then $\phi$ is nonpositive on $[0,1]$; 
      \item If $\tilde{I}=(u_1,u_2)\neq \emptyset$  then $\phi$ is concave, 
      negative on $(0,u_1)\cup(u_2,1)$  and positive on $\tilde{I}$. 
  \end{itemize}
\end{lem}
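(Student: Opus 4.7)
The plan is to compute $\mu_\textbf{x}$ in closed form via a change of variable that linearises both logistic flows, then substitute into the definition of $\Lambda_\textbf{y}$. The natural coordinate is $w := 1/x$, normalised to $y = (w - a_0)/(a_1 - a_0) \in [0,1]$: since the process concentrates on $[1/a_1, 1/a_0]$, this maps onto $[0,1]$, and a one-line computation shows the dynamics becomes $\dot y = -\alpha_0 y$ in environment $0$ and $\dot y = \alpha_1(1-y)$ in environment $1$, with attractors at $0$ and $1$ respectively.

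I would then solve the stationary Fokker--Planck system in these coordinates. Adding the two equations shows that the probability current $-\alpha_0 y \rho_0 + \alpha_1(1-y)\rho_1$ is constant in $y$; since it is $\geq 0$ at $y = 0$ and $\leq 0$ at $y = 1$, it vanishes identically, forcing $\rho_1 = \alpha_0 y /(\alpha_1 (1-y))\, \rho_0$. Substituting back gives a separable ODE whose integration yields $\rho_0(y) \propto y^{\gamma_0 - 1}(1-y)^{\gamma_1}$ and $\rho_1(y) \propto y^{\gamma_0}(1-y)^{\gamma_1 - 1}$ with $\gamma_i = \lambda_i/\alpha_i$; in the $(u,v)$ coordinates, $\gamma_0 = uv$ and $\gamma_1 = (1-u)v$, so the common Beta kernel is exactly $\mathrm{Beta}(uv,(1-u)v)$. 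Plugging these densities into $\Lambda_\textbf{y} = \sum_i \int \beta_i(1 - c_i x)\, \mu_{\textbf{x},i}(dx)$, writing $\beta_i(1 - c_i x) = \beta_i(w - c_i)/w$, factoring $y^{\gamma_0 - 1}(1-y)^{\gamma_1 - 1}$ out of the integral and re-expressing $y, 1-y$ in terms of $w$ via $y = (w-a_0)/(a_1-a_0)$, $1-y = (a_1-w)/(a_1-a_0)$ collapses the integrand into $\phi(y) = P(w)/w$ with
\[
P(w) = \frac{\beta_0}{\alpha_0}(w-c_0)(a_1 - w) + \frac{\beta_1}{\alpha_1}(w - c_1)(w - a_0).
\]
Tracking the normalisation of $\rho_0 + \rho_1$ produces exactly the prefactor $((a_1-a_0)((1-u)/\alpha_0 + u/\alpha_1))^{-1}$, giving the claimed identity.

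For the sign analysis I would use the identity $a_s = a_0 + u(a_1-a_0)$, a direct consequence of \eqref{eq:def_alphas}--\eqref{eq:def_u_v}, to compute $P(a_s) = (a_1-a_0)\beta_s(a_s - c_s)/((1-s)\alpha_0 + s\alpha_1)$. Thus the sign of $P$ on $[a_0, a_1]$ matches that of $\ind_{\{s \in I\}}$, or equivalently (via $y = u$) of $\ind_{\{y \in \tilde I\}}$; this immediately yields the sign statements in both cases, since $w > 0$ on $[a_0, a_1]$. The concavity in the nontrivial case follows from a short geometric argument: $P$ is a quadratic that is negative at both $a_0$ and $a_1$ (because $a_i < c_i$) and positive somewhere in between, hence it must open downward with both roots strictly inside $(a_0, a_1)$; in particular $P(0) < 0$, since $0 < a_0$ lies to the left of both roots. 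Writing $P(w) = p_2 w^2 + p_1 w + P(0)$ gives $\phi(y) = P(w)/w = p_2 w + p_1 + P(0)/w$, so $\phi''(y) = 2(a_1-a_0)^2 P(0)/w^3 < 0$ on $[0,1]$. The main obstacle I anticipate is the normalisation bookkeeping in the second step --- all prefactors and Beta functions must collapse exactly into the compact form of the lemma, which demands care --- while the sign and concavity analysis is then short.
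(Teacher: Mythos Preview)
Your proposal is correct and rests on the same key idea as the paper: the change of variable $x\mapsto w=1/x$ (normalised to $[0,1]$) turns the invariant measure of the one-dimensional switched logistic process into a Beta$(uv,(1-u)v)$ law, and the growth rate $\beta_i(1-c_ix)$ into $P(w)/w$ for a quadratic $P$. The execution differs in two places. First, the paper does not rederive $\mu_{\textbf{x}}$: it starts from the closed-form integral for $\Lambda_{\textbf{y}}$ already established in \cite{lotka} and applies the substitution $x=1/(a_0+\delta y)$ directly inside that integral, whereas you solve the stationary Fokker--Planck system in the $y$-coordinate from scratch; your route is more self-contained but requires the zero-current argument, while the paper's is shorter given the citation. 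Second, for concavity the paper argues indirectly --- $\phi''$ has constant sign, and a convex function cannot have the sign pattern negative/positive/negative, hence $\phi$ is concave --- while you argue directly that the roots of $P$ lie in $(a_0,a_1)$ so $P(0)<0$, whence $\phi''(y)=2\delta^2 P(0)/w^3<0$. Both are short; yours gives slightly more information (explicit sign of the constant term), the paper's avoids locating the roots. Your anticipated obstacle about normalisation bookkeeping is real but manageable: the Beta function factors cancel exactly as you expect.
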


Recall that a function $f$ is called quasi convex on $(a,b)$ if its
  level sets $\BRA{f\leq t}$ are convex, that is, $f$ decreases
  on $(a,c)$ and increases on $(c,b)$ for some $c\in[a,b]$. 

    Our main result describes more precisely the region of positivity
    for $\Lambda_\textbf{x}$ and $\Lambda_\textbf{y}$. 

\begin{thm}[Shape of the regions]  \label{thm:mainResult}
  There exists a function $u\mapsto v_\emph{\textbf{y}}(u)$ from $(0,1)\to
  [0,\infty]$, with domain $\tilde{I}$, such that $\tilde \Lambda_\emph{\textbf{y}}(u,v) <0$ 
  when $v<v_\emph{\textbf{y}}(u)$ and $\tilde \Lambda_\emph{\textbf{y}}(u,v) >0$ 
  when $v>v_\emph{\textbf{y}}(u)$. 

  Moreover $v_\emph{\textbf{y}}$ is  quasi-convex, continuous  on its domain $\tilde{I}$, and 
  tends to $+\infty$ on the endpoints of $\tilde{I}$. 

  Similarly, there exists a function $s\mapsto t_\emph{\textbf{y}}(s)\in[0,\infty]$, with
  domain $I$, going to infinity at the endpoints of $I$, such that: 
  \begin{itemize}
	\item $\Lambda_\emph{\textbf{y}}(s,t) < 0$ if $t<t_\emph{\textbf{y}(s)}$, 
	\item $\Lambda_\emph{\textbf{y}}(s,t) = 0$ if $t=t_\emph{\textbf{y}(s)}$, 
	\item $\Lambda_\emph{\textbf{y}}(s,t) > 0 $ if $t>t_\emph{\textbf{y}(s)}$. 
  \end{itemize}

  The same statement holds in the parameters $(s,t)$ for the function
  $(-\Lambda_\emph{\textbf{x}})$ with~$I$ replaced by~$J$ and
  with a critical function $t_\emph{\textbf{x}}(s)$. 
\end{thm}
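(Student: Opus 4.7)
The plan is to use Lemma~\ref{lem:exprLambda2} to reduce everything to a study of $G(u,v):=\esp{\phi(U_{u,v})}$, whose sign coincides with that of $\tilde\Lambda_\textbf{y}(u,v)$ since the prefactor in the lemma is positive under the assumption $a_0<a_1$. The key structural facts about the family $U_{u,v}\sim\mathrm{Beta}(uv,(1-u)v)$ are that its mean is $u$ for every $v>0$, that it converges weakly to $\delta_u$ as $v\to\infty$, and to $u\delta_1+(1-u)\delta_0$ as $v\to 0^+$.

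First I would prove monotonicity in $v$ and extract $v_\textbf{y}(u)$. A classical convex-order property of the Beta family with fixed mean gives that, for $v_1<v_2$, $U_{u,v_1}$ dominates $U_{u,v_2}$ in the convex order; combined with the concavity of $\phi$ from Lemma~\ref{lem:exprLambda2}, this yields that $v\mapsto G(u,v)$ is non-decreasing, with $G(u,v)\to\phi(u)$ as $v\to\infty$ and $G(u,v)\to(1-u)\phi(0)+u\phi(1)$ as $v\to 0^+$. If $u\notin\tilde I$, Jensen's inequality and concavity give $G(u,v)\leq\phi(u)\leq 0$ for every $v$, so $\tilde\Lambda_\textbf{y}\leq 0$ uniformly and no critical value arises. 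If $u\in\tilde I$, then $\phi(u)>0$ while the $v\to 0^+$ limit is nonpositive (since $\phi(0),\phi(1)\leq 0$), so the continuous monotone function $v\mapsto G(u,v)$ crosses zero at a unique point $v_\textbf{y}(u)\in[0,\infty)$. For the boundary behaviour, as $u$ tends to an endpoint of $\tilde I$ the limit $\phi(u)$ collapses to $0^+$, forcing $v_\textbf{y}(u)\to+\infty$. Continuity of $v_\textbf{y}$ follows from the implicit function theorem applied to $G(u,v)=0$ together with the strict inequality $\partial_v G>0$ at the solution point.

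The main obstacle is the quasi-convexity of $v_\textbf{y}$. By the $v$-monotonicity, the sublevel set $\{u\in\tilde I : v_\textbf{y}(u)\leq c\}$ coincides with $\{u\in\tilde I : G(u,c)\geq 0\}$, so it suffices to show that for each fixed $v>0$ the function $u\mapsto G(u,v)$ is quasi-concave on $\tilde I$. From the explicit form in Lemma~\ref{lem:exprLambda2}, one may decompose $\phi(y)=\ell(y)+C/(a_0+(a_1-a_0)y)$ for an affine function $\ell$ and a constant $C$ which must be strictly negative (the sign being forced by $\phi''\leq 0$). The problem thus reduces to proving convexity in $u$ of $H(u,v):=\esp{1/(a_0+(a_1-a_0)U_{u,v})}$. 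I would attempt this either by differentiating twice under the expectation, using the score identity $\partial_u\log f_{u,v}(y)=v[\psi((1-u)v)-\psi(uv)]+v\log(y/(1-y))$ for the Beta density and controlling the resulting covariance-type terms, or via a coupling argument based on the Dirichlet--Gamma representation $U_{u,v}=\Gamma_1/(\Gamma_0+\Gamma_1)$ with independent Gamma marginals.

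Finally, the $(s,t)$ version follows by composition with $\xi$. Since $\xi$ is triangular, with $u$ depending only on $s$ and $v$ being affine in $t$ at fixed $s$, one defines $t_\textbf{y}(s)$ as the preimage of $v_\textbf{y}(u(s))$ through the $v\leftrightarrow t$ relation; the three-way sign behaviour ($<, =, >$) in $t$ follows from the strict monotonicity of $G$ in $v$ combined with continuity. The statement for $-\Lambda_\textbf{x}$ is obtained by the completely symmetric argument, exchanging the roles of the two species, replacing $\tilde I$ (resp.\ $I$) by $\tilde J$ (resp.\ $J$), and applying the analogue of Lemma~\ref{lem:exprLambda2} to $\tilde\Lambda_\textbf{x}$.
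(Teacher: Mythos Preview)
Your outline follows the paper's proof almost exactly: reduce to $G(u,v)=\esp{\phi(U_{u,v})}$, use the convex order on Beta laws with fixed mean for monotonicity in~$v$, and reduce quasi-convexity of $v_\textbf{y}$ to convexity of $H(u,v)=\esp{1/(a_0+(a_1-a_0)U_{u,v})}$ in~$u$. The only genuine gap is precisely the step you flag as ``the main obstacle'': you reduce correctly to the convexity of $H$ in~$u$ but do not prove it, offering instead two prospective routes (score identity/digamma calculus, or a Gamma coupling). Neither is carried out, and the digamma route in particular leads to heavy computations with no obvious sign control.

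The paper closes this gap by an elementary trick you may have missed. Using the Beta symmetry $U_{u,v}\stackrel{d}{=}1-U_{1-u,v}$, one swaps $H$ for $h(u)=\esp{1/(a_1-\delta U_{u,v})}$ with $\delta=a_1-a_0<a_1$, and expands as a geometric series:
\[
  h(u)=\frac{1}{a_1}\sum_{k\geq 0}\Bigl(\frac{\delta}{a_1}\Bigr)^k\esp{U_{u,v}^k},
  \qquad
  \esp{U_{u,v}^k}=\prod_{r=0}^{k-1}\frac{uv+r}{v+r}.
\]
Each factor is a polynomial in $u$ with nonnegative coefficients, hence so is the product, hence each term is convex on $[0,1]$, and $h$ is convex as a positive combination. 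This gives concavity of $u\mapsto G(u,v)$ (not merely quasi-concavity), which is what the paper actually proves. A secondary point: to get a \emph{unique} crossing $v_\textbf{y}(u)$ and to invoke the implicit function theorem you need strict monotonicity of $v\mapsto G(u,v)$, not just the non-decrease you obtain from the convex order; the paper secures this by observing that $-\phi$ is strongly convex (since $\phi''<0$ on $[0,1]$), so equality in the convex-order inequality would force equality in law, which is impossible for $v\neq v'$.
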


\begin{rem}\label{rem:convex}
  Numerical computations suggest that both $v_\emph{\textbf{y}}$ 
  and~$t_\emph{\textbf{y}}$ are in fact smooth and convex on $\tilde I$ and $I$ 
  respectively. 
\end{rem}

\begin{rem}
  This result is cited in \cite[Proposition 2.5]{lotka}, since it
  answers a conjecture that appeared in a preprint version of~\cite{lotka}. 
\end{rem}
For an illustration of Theorem~\ref{thm:mainResult} and Remark~\ref{rem:convex}, 
see Figure~\ref{fig:lambda_12}. 

\begin{figure}
  {\centering
  \input{lambda1Et2.tex}

  \bigskip

  \input{lambda1Et2_bis.tex}

}

  {\small
    These plots represent the "critical" functions $t_\textbf{y}$ and 
   $t_\textbf{x}$ for different choices of the environments.  
   Denoting environments by the couple 
   $\PAR{\begin{smallmatrix} \alpha \\ \beta\end{smallmatrix}}$; 
   $\PAR{\begin{smallmatrix} a & b \\ c & d\end{smallmatrix}}$,
   the functions are plotted with
   \[
     \cE_0 = 
   \begin{pmatrix} 1 \\ 5 \end{pmatrix} ; 
   \begin{pmatrix} 1 & 1 \\ 2 & 2\end{pmatrix} 
   \text{ (top plot);}
   \qquad
   \cE_0 = 
   \begin{pmatrix} 1 \\ 2 \end{pmatrix} ; 
   \begin{pmatrix} 1 & 2/3 \\ 2 & 4/3\end{pmatrix} 
   \text{ (bottom plot);}
   \qquad \cE_1 = 
   \begin{pmatrix} 5 \\ 1 \end{pmatrix} ; 
   \begin{pmatrix} 3 & 3 \\ 4 & \rho\end{pmatrix}
 \]
 for various values of the parameter $\rho$ appearing in the definition of the environment $\cE_1$. 
 The black curve in both plots is $t_\textbf{y}$, 
 and does not depend on the value of $\rho$. The colored curves are $t_\textbf{x}$. 
The respective domains of these curves are the intervals~$I$ and~$J$. All configurations
are possible: $I\cap J$ may be empty (bottom plot, $\rho=6.8$), a strict subset of
$I$ and $J$ (bottom plot, $\rho=6.2$) or may be $I$ or $J$ itself (top plot). 

Thanks to the results of \cite{lotka} summarized in the table page
\pageref{tab:signs_of_lambda12}, these plots
describe exactly what regimes are possible when the jump rates (parametrized by 
$s$ and $t$) vary, for a given choice of the environments. 

For example the top plot for $\rho=5.5$ has three regimes: extinction of $\textbf{x}$ 
(above the red curve), persistence (between the red and the black curves) 
and extinction of $\textbf{y}$ (below the black curve). For $\rho=4.5$ there is an 
additonal zone (above the yellow curve and below the black one) of extinction of
a random species. In particular, the knowledge of the relative positions
of $I$ and $J$ is not enough to determine the possible regimes. 

All these plots are computed by finding, for a fixed $s$, the zero of the function
$t\mapsto\Lambda(s,t)$; this is done by a simple root finding 
algorithm, using the explicit formula given in Lemma~\ref{lem:exprLambda2}
to evaluate $\Lambda(s,t)$. 
}

  \caption{Shape of positivity regions for $\Lambda_\textbf{x}$ and $\Lambda_{\textbf{y}}$ }
  \label{fig:lambda_12}
\end{figure}

Finally, our last results are dedicated to the support of the non-trivial invariant probability 
measure in the persistence regime. In~\cite{lotka}, it is shown that this measure has a 
density with respect to the Lebesgue measure on the quadrant. Theorem~\ref{thm:support}
provides a full description of its support when the set $I\cap J$ is not empty. Since a precise 
statement requires several notations introduced in Section~\ref{sec:notation}, we postpone 
it to the last section of the document. 

The remainder of the paper is organized as follows.  In
Section~\ref{sec:notation} we describe the various phase portraits for
Lotka-Volterra vector fields, and narrow down the choices of $\cE_0$, $\cE_1$
that lead to interesting behaviour.  In Section~\ref{sec:proofLemma} we prove
Lemma~\ref{lem:exprLambda2}; the main result is proved in
Section~\ref{sec:proofTheorem}. The final section is dedicated 
to the description of the support of invariant measures in the persistence 
regime.

\section{Deterministic picture}\label{sec:notation}
\subsection{Phase portraits of Lotka-Volterra vector field}

We consider here the ODE \eqref{eq:lv} in an environment $\mathcal{E}=(a,b,c,d,\alpha,\beta)$
and describe its possible qualitative behaviours. Much of this description
can be found in~\cite{lotka}, we give it here for the sake of  clarity. 

Barring limit cases that we will not consider, there are essentially
four different phase portraits for the system, that are depicted in Figure~\ref{fi:cases}. 
These four regimes are obtained as follows. 

Notice first that the vector $F_{\mathcal{E}}(x,y)$ is horizontal if $y=0$ or
$cx+dy=1$: we call the line $cx+dy=1$ the horizontal isocline. Similarly
$F_\cE(x,y)$ is vertical if $x=0$ or if $(x,y)$ is on the vertical isocline
$ax+by=1$. These isoclines are the bold straight lines in
Figure~\ref{fi:cases}. 

Each axis is invariant that why in the sequel we are only interested in initial 
conditions with positive coordinates. The three points $(0,0)$, $(0,1/d)$ and 
$(1/a,0)$ are  stationary for $F_{\mathcal{E}}$. The origin is always a source.  
The nature of the other points and the existence of a fourth stationary point 
depends on the parameters; this gives rise to the four types announced above. 

\emph{Type 1.} If $a<c$ and $b<d$,  species $\textbf{y}$ gets extinct.
	\begin{itemize}
\item The point $(1/a,0)$ is the unique global attractor: any solution of the
  ODE starting from a point with positive coordinates converges to $(1/a,0)$. 
\item The point $(0,1/d)$ is a hyperbolic saddle point.  Its stable manifold is
  the vertical axis.  We denote by $\Sigma$ the intersection of its unstable
  manifold with the positive quadrant: $\Sigma$ is the curve made of points $(x_0,y_0)$ such
  that the solution $(x_t,y_t)$ starting at $(x_0,y_0)$ satisfies 
\[
(x_t,y_t)\xrightarrow[t\to-\infty]{}(0,1/d).
\]
We may compute explicitely some characteristics of $\Sigma$: 
in particular, it  leaves $(0,1/d)$ with a slope $-\frac{\alpha (d-b)+\beta d}{\beta c}$
and ends in $(1/a,0)$ with a possibly degenerate slope
$\min\PAR{0,-\frac{\beta(a-c)+\alpha a }{\alpha b}}$. Moreover, $\Sigma$ lies 
in between the horizontal and vertical isoclines. 
\end{itemize}         

\emph{Type 2.} If $a>c$ and $b>d$, species $\textbf{x}$ gets extinct. 
$(0,1/d)$ is the unique sink and $(1/a,0)$ is a saddle point. This is the same
as Type 1 except that the two species $\textbf{x}$ and $\textbf{y}$ are
swapped. 

\emph{Type 3.} If $a>c$ and $b>d$, both species survive.  The points
$(1/a,0)$ and $(0,1/d)$ are saddle points. The isoclines meet at
the sink $(\bar x,\bar y) = (ad-bc)^{-1} (a-c,b-d)$, which 
is the unique global attractor. 

\emph{Type 4.} If $a>c$ and $b<d$, one of the two species gets extinct
depending on the starting point. The meeting point of the isoclines 
$(\bar x,\bar y)=(ad-bc)^{-1} (a-c,d-b)$ is now a saddle point, 
and (non trivial) trajectories converge to one of the
two sinks on the axes, $(1/a,0)$ and $(0,1/d)$. 

\begin{figure}
\begin{center}
 \newcommand{\drawAxes}{%
  \begin{scope}[help lines,->]
    \draw (0,0) -- (5.3,0);
    \draw (0,0) -- (0,4);
  \end{scope}
}
\newcommand{\setNodeCoordinates}[4]{%
  \begin{scope}[label distance=5pt]
  \coordinate (\ns-O) at (0,0)    {};
  \coordinate[label=below:$1/a$] (\ns-A) at (1/#1,0) {};
  \coordinate[label=below:$1/c$] (\ns-C) at (1/#3,0) {};
  \coordinate[label=left:$1/b$] (\ns-B) at (0,1/#2) {};
  \coordinate[label=left:$1/d$] (\ns-D) at (0,1/#4) {};
  \end{scope}
}
\newcommand{\setIntersectionCoordinates}{%
  \path[name path=horizontal] (\ns-A)--(\ns-B);
  \path[name path=vertical]   (\ns-C)--(\ns-D);
  \path[name intersections={of=horizontal and vertical,by={\ns-I}}] ;
}
\newcommand{\drawIsoclines}{%
   \draw[ 
     postaction={decorate},
     decoration={
       markings,
       mark= between positions 0.1 and 0.9 step 0.2 with {%
	 \pgftransformresetnontranslations
	 \draw[<->,red!50] (0pt,5pt) -- (0pt,-5pt);
       }}]
    (\ns-A)--(\ns-B);
   \draw[ 
     postaction={decorate},
     decoration={
       markings,
       mark= between positions 0.1 and 0.9 step 0.2 with {%
	 \pgftransformresetnontranslations
	 \draw[<->,blue!50] (-5pt,0pt) -- (5pt,0pt);
       }}]
  (\ns-C)--(\ns-D);
}
\newcommand{\stablePoint}[1]{%
  \fill (\ns-#1) circle (2pt);
}
\newcommand{\unstablePoint}[1]{%
  \draw[fill=white] (\ns-#1) circle (2pt);
}
\newcommand{\hyperbolicPoint}[1]{%
  \draw[thick] (\ns-#1) +(2pt,2pt) -- +(-2pt,-2pt);
  \draw[thick] (\ns-#1) +(-2pt,2pt) -- +(2pt,-2pt);
}

\newcommand{\legend}[1]{%
  \node[align=center] at (5,3) [%
    rectangle,rounded corners,
    draw,fill=white]%
    {#1};
}

\newcommand{\ns}{}
%
%

\begin{tikzpicture}
  \drawAxes
  \setNodeCoordinates{0.2}{0.4}{1}{0.8}
  \drawIsoclines
  \draw[help lines] plot file{case_1.txt};
  \stablePoint{A}
  \hyperbolicPoint{D}
  \unstablePoint{O}
  \legend{$a<c$, $b<d$\\$(s\in I^c\cap J^c$)}


\end{tikzpicture}
\hfill
\begin{tikzpicture}
  \drawAxes
  \setNodeCoordinates{0.2}{0.8}{1}{0.4}
  \setIntersectionCoordinates
  \draw[help lines] plot file{case_2.txt};
  \drawIsoclines
  \stablePoint{A}
  \stablePoint{D}
  \unstablePoint{O}
  \hyperbolicPoint{I}
  \legend{$a<c$, $b>d$\\$(s\in I^c\cap J$)}
\end{tikzpicture}

\bigskip

\begin{tikzpicture}
  \drawAxes
  \setNodeCoordinates{1}{0.4}{0.2}{0.8}
  \setIntersectionCoordinates
  \draw[help lines] plot file{case_3.txt};
  \drawIsoclines
  \hyperbolicPoint{A}
  \hyperbolicPoint{D}
  \unstablePoint{O}
  \stablePoint{I}
  \legend{$a>c$, $b<d$\\$(s\in I\cap J^c$)}
\end{tikzpicture}
\hfill
\begin{tikzpicture}
  \drawAxes
  \setNodeCoordinates{1}{0.8}{0.2}{0.4}
  \draw[help lines] plot file{case_4.txt};
  \drawIsoclines
  \hyperbolicPoint{A}
  \stablePoint{D}
  \unstablePoint{O}
  \legend{$a>c$, $b>d$\\$(s\in I\cap J$)}
\end{tikzpicture}
\end{center}

 {\small
 In each picture the bold lines are the horizontal and vertical isoclines, 
 the gray lines are trajectories of the~ODE. Sources, sinks and saddle points
 are pictured respectively by white circles, black circles and crosses. 

 Type~$1$ environments correspond to the upper left picture, Type~$2$ to 
 the bottom right one. In Type~$3$ environments (bottom left), the intersection
 of the isoclines attracts the whole quadrant. In Type~$4$ environments (upper right)
 one species or the other gets extinct depending on the starting point. 

 For fixed environments $\cE_0$, $\cE_1$ of Type~$1$, the mixed environment $\cE_s$
 may be of any of the four types, depending on whether~$s$ belongs to the
 intervals~$I$ and~$J$. 
 }
\begin{center}
 \caption{The four possible deterministic regimes for a given environment.}
 \label{fi:cases}
\end{center}
\end{figure}

\subsection{Relative positions of the two environments} 
  
Recall that the vector fields $F_{\mathcal{E}_0}$ and $F_{\mathcal{E}_1}$ are assumed 
favorable to species~\textbf{x}: $a_i<c_i$ and $b_i<d_i$ for $i=0,1$. 
Without loss of generality, we suppose that $a_0<a_1$. The switched 
system may present a surprising behavior if the interval $I$ defined in Equation~\eqref{eq:def-I} 
is not empty. This requires that $c_0<a_1$. As a consequence, we impose 
in the sequel that 
\[
a_0<c_0<a_1<c_1, \quad b_0<d_0 \quad\text{and}\quad b_1<d_1.
\]

\begin{lem}\label{lem:parameters}
If the set $J$ is not empty then $d_1<b_0$ or $d_0<b_1$. Moreover, if $I\cap J$ is non empty 
 then 
\begin{equation}\label{eq:parameters}
a_0<c_0<a_1<c_1 \quad\text{and}\quad b_0<d_0 < b_1<d_1. 
\end{equation}
\end{lem}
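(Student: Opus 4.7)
My plan is to convert both membership conditions $s\in I$ and $s\in J$ into sign conditions on explicit quadratics in $s$, and then close the second part via a short multiplicative comparison that contradicts elementary term-by-term bounds.

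Since $\alpha_s,\beta_s>0$, the sign of $b_s-d_s$ coincides with that of $g(s):=\alpha_s\beta_s(b_s-d_s)$ (and similarly I define $p(s):=\alpha_s\beta_s(a_s-c_s)$ for $I$). A direct expansion using \eqref{eq:def_alphas}--\eqref{eq:def_betas} yields
\[
g(s)=(1-s)^{2}\alpha_0\beta_0(b_0-d_0)+s(1-s)\,C_J+s^{2}\alpha_1\beta_1(b_1-d_1),
\]
with middle coefficient $C_J:=\alpha_0\beta_1(b_0-d_1)+\alpha_1\beta_0(b_1-d_0)$, and analogously for $p$ with $C_I:=\alpha_0\beta_1(a_0-c_1)+\alpha_1\beta_0(a_1-c_0)$. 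For the first claim, the standing assumption $b_i<d_i$ makes the two corner coefficients of $g$ strictly negative, so $g$ can be positive somewhere in $(0,1)$ only if $C_J>0$; since $C_J$ is a positive linear combination of $b_0-d_1$ and $b_1-d_0$, at least one of these is strictly positive, which is exactly $d_1<b_0$ or $d_0<b_1$.

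For the second claim I argue by contradiction: assume $I\cap J\neq\emptyset$ and $b_1\leq d_0$. Both $I$ and $J$ are then non-empty, so the same necessity argument gives $C_I>0$ and $C_J>0$. Applying the first claim under $b_1\leq d_0$ forces $b_0>d_1$, and the standing $b_0<d_0$, $b_1<d_1$ chain into $b_1<d_1<b_0<d_0$ (so in particular $b_1<d_0$ strictly and all the differences below are strictly positive). Rearranging, $C_I>0$ and $C_J>0$ become two strict inequalities between strictly positive quantities:
\[
\alpha_1\beta_0(a_1-c_0)>\alpha_0\beta_1(c_1-a_0),\qquad
\alpha_0\beta_1(b_0-d_1)>\alpha_1\beta_0(d_0-b_1).
\]
Multiplying them and cancelling $\alpha_0\alpha_1\beta_0\beta_1$ gives $(a_1-c_0)(b_0-d_1)>(c_1-a_0)(d_0-b_1)$. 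But the standing inequalities yield the strict opposite factor by factor:
\[
(c_1-a_0)-(a_1-c_0)=(c_0-a_0)+(c_1-a_1)>0,
\]
\[
(d_0-b_1)-(b_0-d_1)=(d_0-b_0)+(d_1-b_1)>0,
\]
so $(c_1-a_0)(d_0-b_1)>(a_1-c_0)(b_0-d_1)$, a contradiction. Hence $d_0<b_1$, which combined with the standing $b_0<d_0$ and $b_1<d_1$ gives \eqref{eq:parameters}.

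The only non-mechanical step I anticipate is spotting the multiplicative pairing of $C_I>0$ with $C_J>0$: it conveniently produces exactly the quantity $(a_1-c_0)(b_0-d_1)$ vs $(c_1-a_0)(d_0-b_1)$, which is controlled term-by-term by the four strict standing inequalities $a_0<c_0$, $a_1<c_1$, $b_0<d_0$, $b_1<d_1$. Everything else is a routine quadratic expansion.
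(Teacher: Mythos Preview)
Your proof is correct and takes a genuinely different route from the paper's. The paper argues the second part geometrically: in the forbidden configuration $b_1<d_1<b_0<d_0$ it observes that the two horizontal isoclines $C_iD_i$ intersect at a point $M=(\tilde x,\tilde y)$, shows that the rectangle $[\tilde x,\infty)\times[0,\tilde y]$ is positively invariant for both $F_{\mathcal{E}_0}$ and $F_{\mathcal{E}_1}$, and then notes that for $s\in I\cap J$ the averaged field $F_{\mathcal{E}_s}$ is of Type~2, making the sink $(0,1/d_s)$ accessible --- which is impossible from inside an invariant rectangle bounded away from the $y$-axis. Your argument is purely algebraic: you encode $s\in I$ and $s\in J$ as positivity of the Bernstein-form quadratics $p,g$, extract the necessary middle-coefficient conditions $C_I>0$, $C_J>0$, and close by the neat multiplicative pairing that produces $(a_1-c_0)(b_0-d_1)>(c_1-a_0)(d_0-b_1)$, which the four standing strict inequalities defeat factor by factor. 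Your approach is more elementary and self-contained (no dynamical notions of invariance or accessibility are needed), while the paper's argument gives a nicer geometric picture of \emph{why} the configuration is dynamically impossible and ties directly into the phase-portrait analysis used later for the support description.
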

\begin{proof}
The first point is clear from the definition of $J$. As a consequence, two configurations 
are compatible with $I$ and $J$ non empty: \eqref{eq:parameters} and   
\[
a_0<c_0<a_1<c_1\quad\text{and}\quad b_1<d_1<b_0<d_0.
\]
Suppose, to derive a  contradiction, that we are in the latter configuration. 
Define the points 
$C_i=(1/c_i,0)$ and $D_i=(0,1/d_i)$ for 
$i=0,1$. If $M=(\tilde x,\tilde y) $ is the intersection of the lines $(C_0D_0)$ 
and $(C_1D_1)$ then the set $[\tilde x,+\infty)\times [0,\tilde y]$ is strongly 
positively invariant under the action by $F_0$ and $F_1$. Furthermore, if $I\cap J$ 
is not empty, this means that there exists $s_0\in (0,1)$ such that $F_{\mathcal{E}_{s_0}}$ is 
of Type 2. In particular, the point $(0,1/d_s)$ is accessible. This is incompatible with the previous 
remark. As a consequence,  $I\cap J$ is empty for this configuration.  
\end{proof}

\section{Expression of the invasion rate}
\label{sec:proofLemma}

Let us first recall the expression of $\Lambda_\textbf{y}$ derived in~\cite{lotka}. 
Letting $p_i = 1/a_i$, $\Lambda_\textbf{y}$ is given by
\[
  \Lambda_\textbf{y}=p_0p_1C\int_{p_0}^{p_1}\! P(x)\theta(x)\,dx  
\]
where 
\begin{align*}
C^{-1} &= \frac{p_1}{\alpha_1}
          \int_{\min(p_0,p_1)}^{\max(p_0,p_1)}
	     \!\frac{\ABS{x-p_0}^{\gamma_0}\ABS{p_1-x}^{\gamma_1-1}}
             {x^{\gamma_0+\gamma_1+1}}\,dx
        + \frac{p_0}{\alpha_0}
	  \int_{\min(p_0,p_1)}^{\max(p_0,p_1)}\!
	    \frac{\ABS{x-p_0}^{\gamma_0-1}\ABS{p_1-x}^{\gamma_1}}
            {x^{\gamma_0+\gamma_1+1}}\,dx,\\
\theta(x)&=
\frac{\ABS{x-p_0}^{\gamma_0-1}\ABS{p_1-x}^{\gamma_1-1}}{x^{\gamma_0+\gamma_1+1}},\\
P(x)&=
\SBRA{\frac{\beta_1}{\alpha_1}(1-c_1x)(1-a_0x)-\frac{\beta_0}{\alpha_0}(1-c_0x)(1-a_1x)}
\frac{a_1-a_0}{\ABS{a_1-a_0}}.
\end{align*}
This quantity is obtained by averaging a growth rate of the second species
with respect to  the invariant measure 
of the one-dimensional PDMP $(X,I)$ on $[0,\infty)\times\BRA{0,1}$ driven by 
\[
  Lf(x,i)=\alpha_i x(1-a_ix) \partial_x f(x,i)+\lambda_i(f(x,1-i)-f(x,i)),
\]
which corresponds to the dynamics of species \textbf{x} when species $\textbf{y}$ is
gone.  In the sequel, we assume that $a_0<a_1$ and set $\delta = a_1 - a_0$.
It is obvious that the recurrent set of $(X,I)$ is $[p_1,p_0]\times\BRA{0,1}$.

In~\cite{lotka}, the continuous part $[p_1,p_0]$ of the state space 
is parametrized by a mapping $[0,1]\ni s\mapsto p_s$
 given by $p_s = 1/a_s$, where
$a_s = (s\alpha_1 a_1 + (1-s)\alpha_0 a_0) / (s\alpha_1 + (1-s)\alpha_0)$. 
It is interesting to look at another
parametrization $[0,1]\ni u \mapsto \tilde{p}(u)$, where
$\tilde{p}(u) = \tilde{a}(u)^{-1} = \frac{1}{a_0 + \delta u}$. 
These parametrizations are summed up in the following diagram:

\begin{center}
\begin{tikzpicture}
  \matrix (m) [matrix of math nodes,column sep=8em,row sep = 3ex]{
     & {[a_0,a_1]} &  \\
     I\subset [0,1] & & {[0,1] \supset \tilde{I} }\\
     & {[p_1,p_0]} & \\
    };
    \draw[->] (m-2-1)-- node[auto]      {$s\mapsto a_s$}          (m-1-2);
    \draw[->] (m-2-3)-- node[auto,swap] {$u\mapsto \tilde{a}(u)$} (m-1-2);
    \draw[->] (m-2-1)-- node[auto,swap] {$s\mapsto p_s$}          (m-3-2);
    \draw[->] (m-2-3)-- node[auto] {$u\mapsto \tilde{p}(u)$} (m-3-2);
    \draw[->] (m-1-2)-- node[auto] {$x\mapsto 1/x$}          (m-3-2);
\end{tikzpicture}
\end{center}

This parameter $u$ is  the one given in the introduction
and
corresponds
to a ratio of the $\gamma$, when $s$ corresponds to a ratio of $\lambda$, 
in the sense that:
\[ 
  \tilde{p}\PAR{\frac{\gamma_0}{\gamma_0 + \gamma_1}} = p\PAR{\frac{\lambda_0}{\lambda_0+
\lambda_1}}.
\]
\begin{rem}
  As already mentioned above, the parameter $u$ and the interval $\tilde{I}$
  are used implicitly in~\cite{lotka}: $u$ appears in Remark 1, and the map $S$
  defined at the beginning of Section $4$ is given in our notation by $S(u) =
  p^{-1}(\tilde{p}(u))$. 
\end{rem}

Let us study the integral $\int_{p_1}^{p_0} P(x)\theta(x)dx$. 
Set  $y=\tilde{p}^{-1}(x)$, so that:
\begin{align*}
  x&= \tilde{p}(y) = \frac{1}{\tilde{a}(y)} = \frac{1}{a_0+\delta y}, &
  dx &= -\delta \tilde{p}(y)^2 dy \\
  p_0 - x &= \delta p_0  y\tilde{p}(y), &
  x - p_1 &=  \delta p_1 (1-y)\tilde{p}(y).
\end{align*}
Changing variables in the integral yields:
\begin{align*}
  \int_{p_1}^{p_0} P(x) \theta(x) dx 
  &= \int_0^1 P(\tilde{p}(y))\PAR{\delta p_0 y \tilde{p}(y)}^{\gamma_0 - 1}
  \PAR{\delta p_1 (1-y) \tilde{p}(y)}^{\gamma_1 - 1} 
  \tilde{p}(y)^{-\gamma_0 - \gamma_1 - 1}
  \delta \tilde{p}(y)^2 dy \\
  &= 
  \delta^{\gamma_0+\gamma_1 - 1} p_0^{\gamma_0-1}p_1^{\gamma_1 - 1}
  \int_0^1 P(\tilde{p}(y)) \frac{1}{\tilde{p}(y)} y^{\gamma_0 -1} (1-y)^{\gamma_1 - 1} dy \\
  &= 
  \delta^{\gamma_0+\gamma_1 - 1} p_0^{\gamma_0-1}p_1^{\gamma_1 - 1}
  \int_0^1 \phi(y) y^{\gamma_0 -1} (1-y)^{\gamma_1 - 1} dy \\
  &= 
  \delta^{\gamma_0+\gamma_1 - 1} p_0^{\gamma_0-1}p_1^{\gamma_1 - 1}
  B(uv,(1-u)v) \esp{\phi(U_{u,v})}.
\end{align*}
since $\phi(y) = \frac{1}{\tilde{p}(y)} P (\tilde{p}(y))$. 
A similar computation gives the exact formula
\[ C^{-1} = \PAR{ \delta^{\gamma_0+\gamma_1 - 1 } p_0^{\gamma_0 - 1} p_1^{\gamma_1 - 1}
  B(uv,(1-u)v) 
}p_0p_1\delta \PAR{ \frac{1}{\alpha_0}(1- u) + \frac{1}{\alpha_1} u} 
\]
for the normalization constant $C$. Therefore
\begin{align*}
  \Lambda_\textbf{y}(\gamma_0,\gamma_1) 
  &= \frac{1}{\delta\PAR{\frac{1}{\alpha_0} (1-u) + \frac{1}{\alpha_1} u}}
  \esp{\phi(U_{u,v})}.
\end{align*}

Let us study $\phi$ more precisely. Since $P$ is a second-degree polynomial, 
let us write it down as $P(x)=A_2x^2 + A_1x + A_0$. Then 
\[ \phi(y) = \frac{A_2}{a_0 + \delta y} + A_1 + A_0(a_0 + \delta y).\]
The second derivative is 
\[ \phi''(y) = \frac{2A_2\delta^2}{(a_0 + \delta y)^3},\]
which has the sign of $A_2$ on $[0,1]$, so $\phi$ is either strictly
convex or strictly concave. However, the proof of the first item
of Proposition 2.2 in~\cite{lotka} shows that (still in the case $a_0<a_1$)
$P(p_s) = \frac{\beta_s}{\alpha_1 s} (1 - a_0/a_s)(1 - c_s/a_s)$ 
has the same sign as $a_s-c_s$, that is, $P(p_s)$ is positive iff $s\in I$. 
If $I$ is empty, so is $\tilde{I}$, and $\phi$ is negative on $]0,1[$. 
If $I$ is not empty, so is $\tilde{I}$, and $\phi$ is positive on $\tilde{I}$
(and nonpositive outside $\tilde{I}$), therefore $\phi$ must be concave.

\section{Shape of the positivity region}
\label{sec:proofTheorem}

We begin with a lemma, which is proved in the next two sections.  The existence
and  properties of $v_c$ and $t_c$ stated in Theorem~\ref{thm:mainResult} are
deduced from this lemma in Section~\ref{sec:reg}. 
\begin{lem}
  If $I$ is non empty, the map $(u,v)\mapsto \esp{\phi(U_{u,v})}$ is (strictly)
  increasing in $v$ and concave in~$u$. 
\end{lem}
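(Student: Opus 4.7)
The plan is to reduce both claims to properties of the single auxiliary function
\[ H(u,v) := \esp{\frac{1}{a_0+\delta U_{u,v}}}, \]
and then handle them by two quite different techniques. The starting point is the explicit form $\phi(y) = A_2/z + A_1 + A_0 z$, with $z = a_0 + \delta y$, established in the proof of Lemma~\ref{lem:exprLambda2}; the concavity of $\phi$ on $[0,1]$ (valid when $I\neq\emptyset$, as proved there) is equivalent to $A_2<0$. Since $\esp{a_0+\delta U_{u,v}} = a_0 + \delta u$ is affine in $u$ and independent of $v$, one has
\[ \esp{\phi(U_{u,v})} \;=\; A_2\, H(u,v) + A_1 + A_0(a_0+\delta u), \]
so the lemma reduces to: $H$ is strictly decreasing in $v$ and (strictly) convex in $u$.

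For the $v$-monotonicity I would first prove the stronger convex-order statement $U_{u,v_2}\cvxleq U_{u,v_1}$ when $v_1<v_2$, and then apply it to the strictly convex function $y\mapsto 1/(a_0+\delta y)$. Both variables have mean $u$, and the density ratio satisfies $f_{u,v_2}(y)/f_{u,v_1}(y) \propto y^{u(v_2-v_1)}(1-y)^{(1-u)(v_2-v_1)}$, a strictly unimodal function vanishing at $0$ and $1$; hence $f_{u,v_1}-f_{u,v_2}$ has exactly two sign changes, in the pattern $+,-,+$. The classical two-crossings argument (subtract from a convex test function $g$ the affine function matching it at the two crossings; this affine subtraction is invisible in the integral because means and totals match, while $g$ minus this affine function has the opposite sign to $f_{u,v_1}-f_{u,v_2}$ on each of the three pieces) yields $\esp{g(U_{u,v_1})}\geq\esp{g(U_{u,v_2})}$, with strict inequality when $g$ is strictly convex.

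The convexity in $u$ is the part I expect to be the main obstacle: a direct second derivative calculation in the Beta density produces a covariance between $\phi(U)$ and $(h(U)-\esp{h(U)})^2$ with $h$ the logit, and it is not clear how to sign such a quantity. My plan is to avoid this via the Gamma representation $U_{u,v} = S/(S+T)$ with $S\sim\Gamma(uv,1)$ and $T\sim\Gamma((1-u)v,1)$ independent. Then $1/(a_0+\delta U_{u,v}) = (S+T)/(a_1 S + a_0 T)$, and inserting $1/x = \int_0^\infty e^{-tx}\,dt$, Fubini, and the Gamma Laplace transforms $\esp{e^{-\lambda S}}=(1+\lambda)^{-uv}$, $\esp{S e^{-\lambda S}} = uv(1+\lambda)^{-uv-1}$ leads to
\[
  H(u,v) = v\int_0^\infty (1+ta_1)^{-uv}(1+ta_0)^{-(1-u)v}\left[\frac{u}{1+ta_1}+\frac{1-u}{1+ta_0}\right]dt.
\]
Setting $A := 1+ta_1 > B := 1+ta_0$, the integrand rewrites, up to the $u$-independent positive factor $v B^{-v}$, as $e^{\lambda u}(p+qu)$ with $\lambda := v\log(B/A)<0$, $p := 1/B>0$, $q := 1/A-1/B<0$ and $p+qu>0$ on $[0,1]$. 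A direct computation gives
\[ \partial_u^2\bigl[e^{\lambda u}(p+qu)\bigr] = e^{\lambda u}\bigl[\lambda^2(p+qu)+2\lambda q\bigr], \]
which is strictly positive since $\lambda^2(p+qu)>0$ and $2\lambda q>0$ (both $\lambda$ and $q$ being negative). Integration in $t$ preserves (strict) convexity, so $H(\cdot,v)$ is strictly convex; combined with $A_2<0$ this gives the (strict) concavity of $\esp{\phi(U_{u,v})}$ in $u$.
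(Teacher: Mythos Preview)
Your proof is correct. The reduction to the auxiliary function $H(u,v)=\esp{(a_0+\delta U_{u,v})^{-1}}$ and the $v$-monotonicity via the convex order are essentially what the paper does (the paper applies the convex order to $-\phi$ directly rather than to $y\mapsto 1/(a_0+\delta y)$, and presents the two-crossings argument through the cumulative distribution functions rather than via the affine-subtraction trick, but these are cosmetic differences).

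The convexity in $u$ is where the two arguments genuinely diverge. The paper first uses the symmetry $U_{u,v}\overset{d}{=}1-U_{1-u,v}$ to pass from $g(u)=H(u,v)$ to $h(u)=\esp{(a_1-\delta U_{u,v})^{-1}}$, then expands $1/(a_1-\delta U)$ as a power series $\sum_k (\delta/a_1)^k U^k$ and observes that the Beta moments $\esp{U_{u,v}^k}=\prod_{r=0}^{k-1}(uv+r)/(v+r)$ are polynomials in $u$ with nonnegative coefficients, hence convex. Your route via the Gamma representation and the Laplace integral $1/x=\int_0^\infty e^{-tx}\,dt$ is more analytic but more direct: it avoids the symmetry trick and the moment computation altogether, and produces an integrand whose second derivative in $u$ is manifestly positive. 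The paper's argument is purely algebraic and requires no Fubini justification; yours is arguably more robust (it does not rely on the particular product form of Beta moments) and also yields strict convexity immediately, which the paper's polynomial argument gives only for $k\geq 2$.
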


\subsection{\texorpdfstring{Monotonicity in $v$}{Monotonicity in v}}
We wish to compare $\esp{\phi(U_{u,v})}$ for different values of $v$.  Since
the function $\psi = (-\phi)$ is convex, a natural idea is to compare the distributions
of $U_{u,v}$ for various $v$ in the \emph{convex order}. Let us first recall
the definition of this order. 
\begin{defi}[Convex order]
  Let $X$ and $Y$ be two random variables. If the inequality 
  \[ \esp{\psi(X)} \leq \esp{\psi(Y)}\]
  holds for all convex functions $\psi$ such that the expectations exist,
  $X$ is said to be \emph{smaller than $Y$ in the convex order}. 
  We denote by $X\cvxleq Y$ this relation. 
\end{defi}
The convex order admits the following  characterization
in terms of cumulative distribution functions (\cite[Theorem 3.A.1]{SS07}). 
\begin{thm}[Convex order and distribution functions]
  \label{thm:convex_order_cdf}
  The variables $X$ and $Y$ satisfy $X\cvxleq Y$ if and only if
  $\esp{X} = \esp{Y}$ and, for all $x$, 
  \begin{equation}
    \label{eq:ssd}
    \int_{-\infty}^x F_X(t)dt \leq \int_{-\infty}^x F_Y(t) dt < \infty,
  \end{equation}
  where $F_X$ and $F_Y$ are the cumulative distribution functions
  of $X$ and $Y$. 
\end{thm}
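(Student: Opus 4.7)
The plan is to establish the two implications separately, using the hockey-stick functions $\psi_x(t) = (x-t)_+$ as the bridge between the integral condition \eqref{eq:ssd} and expectations of general convex test functions.

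For the forward direction, suppose $X \cvxleq Y$. Applying the definition with the (convex) functions $\psi(t) = t$ and $\psi(t) = -t$ yields $\esp{X} = \esp{Y}$. Next, for each fixed $x$, $\psi_x$ is convex, and a standard Fubini computation gives
\[
  \esp{\psi_x(X)} = \esp{(x-X)_+} = \int_{-\infty}^x F_X(t)\, dt,
\]
with the analogous identity for $Y$; testing against $\psi_x$ then yields exactly \eqref{eq:ssd}. Finiteness of the integrals follows from the integrability of $X$ (needed for $\esp{X}$ to make sense).

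For the converse, the key is to represent an arbitrary convex $\psi$ as an affine function plus a positive superposition of hockey-sticks. Fixing a reference point $a$, the right derivative $\psi'_+$ is nondecreasing and its Stieltjes measure $\mu_\psi$ is nonnegative; a direct two-step integration of $\psi'_+(t) - \psi'_+(a) = \mu_\psi((a,t])$ yields
\[
  \psi(t) = \psi(a) + \psi'_+(a)(t-a) + \int_{(a,\infty)} (t-s)_+\, d\mu_\psi(s) + \int_{(-\infty,a]} (s-t)_+\, d\mu_\psi(s).
\]
Using the elementary identity $(t-s)_+ = (s-t)_+ + (t-s)$ converts the first integrand into $\psi_s(t)$ up to a linear correction that can be absorbed into the affine part. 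Applying Fubini to push $\esp{\cdot}$ under the $d\mu_\psi$ integral, the difference $\esp{\psi(Y)} - \esp{\psi(X)}$ becomes an integral of the nonnegative function $s \mapsto \int_{-\infty}^s (F_Y(t) - F_X(t))\,dt$ against $d\mu_\psi$, plus a vanishing contribution from the affine part since $\esp{X} = \esp{Y}$.

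The principal obstacle is justifying Fubini when $\mu_\psi$ carries infinite mass (e.g.\ $\psi(t) = t^2$ gives $d\mu_\psi = 2\,dt$, and the rewriting using $(t-s)_+ = (s-t)_+ + (t-s)$ only works when $\mu_\psi$ has finite mass on the relevant side). The standard remedy is truncation: restrict $\mu_\psi$ to $[-n,n]$, apply the clean argument to the resulting bounded convex approximation, and pass to the limit via monotone convergence. The assumption that $\esp{\psi(X)}$ and $\esp{\psi(Y)}$ are well-defined is precisely the integrability needed for these limits to behave. A minor technical detail is that $F_X$ and its left-continuous version agree Lebesgue-almost everywhere, so the integrals in \eqref{eq:ssd} are unambiguous regardless of convention.
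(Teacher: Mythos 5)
Your proof is correct, but note that the paper does not prove this statement at all: it is quoted as a known characterization of the convex order, with a citation to \cite[Theorem 3.A.1]{SS07}. Your argument --- hockey-stick test functions $(x-t)_+$ and the Fubini identity $\esp{(x-X)_+}=\int_{-\infty}^x F_X(t)\,dt$ for the forward direction, and the representation of a convex function as an affine part plus a nonnegative mixture of hockey-sticks, with truncation of $\mu_\psi$ and monotone convergence to handle infinite total curvature, for the converse --- is essentially the standard proof found in that reference, and the one genuinely delicate point (Fubini when $\mu_\psi$ is not finite) is correctly identified and repaired.
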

\begin{rem}[Terminology]
  The convex order is one possible way of formalizing the idea
  that $Y$ is "more variable/more spread out" than $X$. Note 
  in particular if $X\cvxleq Y$ then $\var{X}\leq \var{Y}$. 
  For details on the convex order,  a survey of other formalizations of
  variability  and many other notions of stochastic order we refer
  to~\cite{SS07}. 

  Let us also note that, when \eqref{eq:ssd} holds (without assuming
  $\esp{X} = \esp{Y}$), $X$ is said to \emph{second-order stochastically dominate
    $Y$}; this is equivalent to asking the inequality $\esp{\psi(X)}\leq\esp{\psi(Y)}$
  for any \emph{non-increasing} convex function $\psi$. We refer to 
  \cite[Appendix B.19]{MOA11} for a proof,  additional discussion, 
  and references to the literature. 
\end{rem}
We will need the following easy fact. 
\begin{thm}
  \label{thm:ordre_strict}
  If $X\cvxleq Y$ and $\esp{\psi(X)} = \esp{\psi(Y)}$ for some 
  strongly convex $\psi$, then $X$ and $Y$ have the same distribution. 
\end{thm}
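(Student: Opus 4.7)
The plan is to reduce the general statement to the case $\psi(x)=x^2$ by exploiting strong convexity, then use the integral characterization of the convex order to conclude that $X$ and $Y$ share the same law.

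First, since $\psi$ is strongly convex there exists $\lambda>0$ such that $\phi(x):=\psi(x)-\lambda x^2/2$ is convex. Applying $X\cvxleq Y$ to the convex map $x\mapsto x^2$ and to $\phi$ separately yields
\[
\esp{Y^2}-\esp{X^2}\geq 0\quad\text{and}\quad \esp{\phi(Y)}-\esp{\phi(X)}\geq 0.
\]
These two nonnegative quantities sum to $\esp{\psi(Y)}-\esp{\psi(X)}=0$, so both must vanish. In particular $\esp{X^2}=\esp{Y^2}$, and it is enough to prove the result for $\psi(x)=x^2$.

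Next, I would establish the identity
\[
\esp{Y^2}-\esp{X^2}=2\int_{-\infty}^{+\infty}\Delta(t)\,dt,\qquad \Delta(t):=\esp{(Y-t)_+}-\esp{(X-t)_+},
\]
noting that $\Delta\geq 0$ since $X\cvxleq Y$ and each map $x\mapsto(x-t)_+$ is convex. The identity itself follows from the elementary pointwise formula $z^2=2\int_0^{\infty}(z-t)_+\,dt+2\int_{-\infty}^0(t-z)_+\,dt$ (valid for every $z\in\dR$): take expectations, apply Fubini, and subtract. The $\int_0^\infty$ contribution is $2\int_0^\infty\Delta(t)\,dt$ directly; for the $\int_{-\infty}^0$ contribution, one uses the identity $(t-Z)_+=(Z-t)_+-(Z-t)$ together with $\esp{X}=\esp{Y}$ to check that $\esp{(t-Y)_+}-\esp{(t-X)_+}=\Delta(t)$ there as well.

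Combining the two steps gives $\int\Delta\,dt=0$. Since $\Delta$ is continuous (as a difference of two continuous functions) and nonnegative, this forces $\Delta\equiv 0$. Now writing $\Delta(t)=\int_t^{+\infty}(F_X-F_Y)(s)\,ds$ and differentiating in $t$, we get $F_X=F_Y$, so $X$ and $Y$ have the same distribution. The crux of the argument is the integral identity in the third paragraph; once it is in hand, everything else is either a direct application of the convex order or a routine manipulation of cumulative distribution functions.
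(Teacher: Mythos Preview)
Your proof is correct, but it follows a different route from the paper's. The paper invokes the martingale-coupling characterization of the convex order (Strassen's theorem): there exist $(X,Z)$ with $\esp{Z\mid X}=0$ and $X+Z\overset{d}{=}Y$; strong convexity then gives $\psi(X+tZ)\leq (1-t)\psi(X)+t\psi(X+Z)-\tfrac{m}{2}t(1-t)Z^2$, and taking expectations forces $\esp{Z^2}=0$, hence $Z=0$ almost surely. You instead first strip off the quadratic part to reduce to $\psi(x)=x^2$, then use the stop-loss representation $\esp{Y^2}-\esp{X^2}=2\int_{\dR}\Delta(t)\,dt$ with $\Delta\geq 0$ to conclude $\Delta\equiv 0$ and recover $F_X=F_Y$. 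Your argument is more elementary in that it avoids the coupling theorem and works directly with distribution functions; the paper's argument is shorter and yields the slightly stronger statement that $X=Y$ almost surely in the coupling. One small point worth making explicit in your write-up: the hypothesis that $\esp{\psi(X)}$ and $\esp{\psi(Y)}$ are finite, together with strong convexity, guarantees $\esp{X^2},\esp{Y^2}<\infty$, which is what justifies the Tonelli step in your integral identity.
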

\begin{proof}
  Suppose that $X$ and $Y$ satisfy the hypotheses. 
  By a classical characterization of the convex order
  (\cite[Theorem 3.A.4]{SS07})
  there exists a couple $(X,Z)$ such that $\esp{Z|X} = 0$ and $X+Z$ 
  has the same distribution as $Y$. Since $\psi$ is strongly convex, 
  there exists an $m>0$ such that for all $t\in[0,1]$, 
  \[
    \psi(X+tZ) =  \psi( (1-t)X + t(X+Z))\leq (1-t)\psi(X) + t\psi(X+Z) - \frac{mt(1-t)}{2}Z^2. 
  \]
  Taking expectations we get
  \[
    \esp{\psi(X)} \leq \esp{\psi(X+tZ)} \leq (1-t)\esp{\psi(X)} + t\esp{\psi(Y)} 
    - \frac{mt(1-t)}{2} \esp{Z^2},
  \]
  where the first inequality comes from Jensen's inequality and $\esp{Z|X}=0$. 
  Since $\esp{\psi(Y)} = \esp{\psi(X)}$, $Z$ must be zero almost surely, 
  so $X$ and $Y$ have the same distribution. 
\end{proof}

\begin{thm}[Orderings between Beta r.v.]
  Let $X\sim \mathrm{Beta}(\alpha,\beta)$ and $X'\sim\mathrm{Beta}(\alpha',\beta')$. 

  If $\alpha< \alpha'$, $\beta<\beta'$ and $\alpha/(\alpha+\beta)  =  \alpha'/(\alpha'+\beta')$, 
  then $X' \cvxleq X$. 

\end{thm}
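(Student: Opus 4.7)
The plan is to verify the characterization of convex order given by Theorem~\ref{thm:convex_order_cdf}. Equality of means is immediate: the common ratio $\alpha/(\alpha+\beta)=\alpha'/(\alpha'+\beta')$ is precisely the mean of each Beta variable, so $\esp{X}=\esp{X'}$. It therefore suffices to prove the integrated-CDF inequality $\int_0^x F_{X'}(t)\,dt \leq \int_0^x F_X(t)\,dt$ for every $x\in[0,1]$.

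The heart of the argument is a Karlin--Novikoff type two-sign-change analysis of the densities. I would first examine the ratio
\[
  r(t) \;=\; \frac{f_{\alpha,\beta}(t)}{f_{\alpha',\beta'}(t)}
  \;=\; \frac{B(\alpha',\beta')}{B(\alpha,\beta)} \, t^{-(\alpha'-\alpha)}(1-t)^{-(\beta'-\beta)}.
\]
Since $\alpha'-\alpha>0$ and $\beta'-\beta>0$, the ratio blows up at both endpoints of $(0,1)$, while $(\log r)'(t)=-(\alpha'-\alpha)/t+(\beta'-\beta)/(1-t)$ vanishes at a single interior point. Hence $r$ is strictly $U$-shaped. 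Because $\int_0^1 r(t)f_{\alpha',\beta'}(t)\,dt=1$, the ratio $r$ must cross the value $1$, and the $U$-shape forces it to do so at exactly two points $0<t_1<t_2<1$. Equivalently, the function $f_X-f_{X'}$ exhibits the sign pattern $+,-,+$ on $(0,1)$.

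From here the conclusion follows by a short double integration. Set $G(x)=F_{X'}(x)-F_X(x)$; then $G'=f_{X'}-f_X$ changes sign as $-,+,-$, and $G(0)=G(1)=0$, so that $G$ is strictly negative on an initial interval and strictly positive on a terminal one, crossing zero exactly once. Define $H(x)=\int_0^x G(t)\,dt$. We have $H(0)=0$, and $H(1)=\esp{X}-\esp{X'}=0$ by the equality of means; moreover $H'=G$ changes sign once from negative to positive. Therefore $H$ first decreases from $0$, then increases back to $0$, which forces $H\leq 0$ on $[0,1]$. This is exactly $\int_0^x F_{X'}(t)\,dt \leq \int_0^x F_X(t)\,dt$ as required, and Theorem~\ref{thm:convex_order_cdf} then yields $X'\cvxleq X$.

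The main point requiring care is to ensure that the sign-change counts on $f_X-f_{X'}$ are exact rather than mere upper bounds; this is what lets one promote pointwise sign information on densities to the monotonicity statements needed for $G$ and $H$. Both hypotheses $\alpha<\alpha'$ and $\beta<\beta'$ are essential here: they are precisely what produces the double blow-up of $r$ at the endpoints which pins down the $U$-shape and the exact count of two crossings of $1$.
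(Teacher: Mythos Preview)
Your proof is correct and is essentially the same as the paper's: both analyze the ratio of the two Beta densities, use the strict unimodality (equivalently, your $U$-shape of the reciprocal) to obtain exactly two crossings, and then integrate twice to verify the criterion of Theorem~\ref{thm:convex_order_cdf}. The only cosmetic difference is that the paper looks at $f_{\alpha',\beta'}/f_{\alpha,\beta}$ (which vanishes at the endpoints) while you look at its reciprocal; the sign pattern on $f_X-f_{X'}$ and the subsequent double integration are identical.
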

\begin{proof}
  Call $f_{\alpha,\beta}$, $f_{\alpha',\beta'}$ the densities of the distributions. 
  Compute their ratio:
  \[ \frac{f_{\alpha',\beta'}(x)}{f_{\alpha,\beta}(x)} = C_{\alpha,\beta,\alpha',\beta'} x^{\alpha'-\alpha} (1-
x)^{\beta'-\beta}.\]
  In the first case, 
  this ratio starts and ends in zero, is strictly increasing on $[0,x_0]$ and
  strictly decreasing on $[x_0,1]$. Since the
  two functions are densities, the ratio must cross $1$ exactly twice, say in $x_1$, $x_2$.  Therefore
  \[
  d(x) = f_{\alpha',\beta'} - f_{\alpha,\beta}
  \]
  is positive on $(x_1,x_2)$ and negative on $(0,x_1)$ and $(x_2,1)$. 
  Therefore
  \[ D(x) = F_{X'}(x) - F_{X}(x)\]
  starts at zero, decreases on $[0,x_1]$, increases on $[x_1,x_2]$ and decreases on $[x_2,1]$, so
  $D(x)$ is negative on $[0,x_3]$ and positive on $[x_3,1]$ (since it ends at zero). 
  Integrating once more, 
  \[ \int_0^x D(t) dt\]
  starts and ends at zero (since $\esp{X} = \esp{X'}$) and is decreasing-increasing, therefore it is non-
positive. Thanks to Theorem~\ref{thm:convex_order_cdf}, this implies $X'\cvxleq X$. 
\end{proof}

\begin{proof}[Proof of the monotonicity in $v$]
   Suppose $v<v'$ and put $\alpha= uv$, $\alpha'=uv'$, $\beta=(1-u)v$, $\beta'=(1-u)v'$. The
   theorem shows that $U_{u,v'}\cvxleq U_{u,v}$. Since 
   $\psi = (-\phi)$ is strongly convex, this implies
   \[ \esp{\phi(U_{u,v})} < \esp{\phi(U_{u,v'})}, \]
   by Theorem~\ref{thm:ordre_strict},
   so $v\mapsto \esp{\phi(V_{u,v})}$ is (strictly) increasing. 
\end{proof}

\subsection{\texorpdfstring{Concavity in $u$}{Concavity in u}}
Even though $\phi$ is concave and the r.v. $U_{u,v}$, $U_{u',v}$ are
stochastically comparable (for the usual stochastic order), this is not enough
to show the concavity of $u\mapsto \esp{\phi(U_{u,v})}$. 
We prove it by elementary means. First, recalling the explicit expression
of $\phi$ we write:
\begin{align*}
  \esp{\phi(U_{u,v})} 
  &= A\esp{\frac{1}{a_0 + \delta U_{u,v}}} + B + C \esp{a_0 + \delta U_{u,v}} \\
  &= A\esp{\frac{1}{a_0 + \delta U_{u,v}}} + B + C(a_0 + \delta u). 
\end{align*}
Since $A$ is negative (see the proof of Lemma~\ref{lem:exprLambda2}) and
the last term is linear, it is enough to show that 
\[
  g: u \mapsto \esp{\frac{1}{a_0+\delta U_{u,v}}}
\]
is convex. It is a bit easier to use the symmetry of Beta random 
variables
($U_{u,v}$ and $1-U_{1-u,v}$ have the same distribution) and look at
\[ 
  h: u\mapsto \esp{\frac{1}{a_1 - \delta U_{u,v}}}. 
\]
Since $g(u)=h(1-u)$, $g$ will be convex if $h$ is convex. 
Now, recalling that $\delta = (a_1-a_0)< a_1$, we write
the series development:
\begin{align*}
  h(u) = \frac{1}{a_1} \esp{\sum_{k=0}^\infty \PAR{\frac{\delta}{a_1}}^k U_{u,v}^k} 
  = \frac{1}{a_1} \sum_{k=0}^\infty           \PAR{\frac{\delta}{a_1}}^k \esp{U_{u,v}^k}
\end{align*}
so $h$ is a mixture of the functions $h_k$ given by:
\[
  h_k(u) =  \esp{U_{u,v}^k} 
  = \prod_{r=0}^{k-1} \frac{ uv + r}{v+r}
  = \frac{1}{\prod_{r=0}^{k-1}(v+r)} (uv)(uv+1)\cdots (uv + k-1). 
\]
This is a polynomial function of $u$ with positive coefficients, therefore
it is convex on $[0,1]$. This concludes the proof. 

\subsection{Properties of the frontier}
\label{sec:reg}
We have just shown that $v\mapsto \tilde \Lambda_\textbf{y}(u,v)$ is  
increasing.  From~\cite[Proposition 2.2]{lotka} and the fact that in the change
of variables $(s,t)\leftrightarrow (u,v)$, $u$ only depends on $s$, and for
fixed $u$, $v$ is an increasing function of $t$,  we know additionally that it
admits:
\begin{itemize}
  \item negative limits at $0$ and $\infty$ if $u$ does not belong to the closure of $\tilde{I}$, 
  \item a negative limit at $0$ and a positive limit
    at $\infty$ if $u\in \tilde{I}$. 
\end{itemize}
Since $\tilde \Lambda_\textbf{y}$ has the same sign as $\esp{\phi(U_{u,v})}$, 
this justifies the existence of $v_c$, and we have
\[ 
  \tilde \Lambda_\textbf{y}(u,v) = 0 \iff u\in\tilde{I}, v=v_c(u). 
\]

Let us prove that $v_c$ is quasi-convex. 
Let $a<b<c$ be three points in $(0,1)$. If $a$ or $c$ are not in $\tilde{I}$, 
$v_c(b)$ is less than $\max(v_c(a), v_c(c)) = \infty$. If $a$ and $c$ are
in $\tilde{I}$, let $M>\max(v_c(a), v_c(c))$. Since $\tilde \Lambda_\textbf{y}(u,\cdot)$ 
is increasing, $\tilde \Lambda_\textbf{y}(a,M)$ and $\tilde \Lambda_\textbf{y}(c,M)$ are positive. 
Since $u\mapsto \dE\phi(U_{u,v})$ is concave, $\tilde \Lambda_\textbf{y}(b,M)$ is positive. 
Therefore $v_c(c) \leq M$. Sending $M$ to $\max(v_c(a), v_c(b))$ yields the 
quasi-convexity of $v_c$. 

Let us now show the regularity properties. Let $u_n$ be an increasing 
sequence in $\tilde{I}$, converging to some $u\in(0,1)$. Since
$v_c$ is quasi-convex, $v_n = v_c(u_n)$ is eventually  monotone, 
so it converges to some  $v\in [0,\infty]$. If $v$ is finite, since 
the zero set of $\Lambda_\textbf{y}$ is closed, by continuity, $v = v_c(u)$, so 
$u$ must be in $\tilde{I}$. Conversely, 
if $u\in \tilde{I}$, $v_c$ is bounded on
a neighborhood of $u$ by quasi-convexity, so $v$ is finite. This shows
that $v_c$ is continuous on $\tilde{I}$ and converges to $\infty$ 
at the endpoints. 

The properties of the change of variables $(s,t)\leftrightarrow(u,v)$ 
show that $v_c$ is well-defined and continuous with the correct limits. 

\section{Support of the invariant measure} \label{se:support}

Note that the stochastic Lotka-Volterra process has at least two invariant 
probability measures, supported on the coordinate axes. In the persistence regime, 
we are interested in the third invariant measure, whose support 
$\Gamma\times \BRA{0,1}$ is such that $\Gamma$ 
has non empty interior. Several properties of $\Gamma$ are 
established in~\cite{lotka} (see below). In this section, we aim at providing 
a full description of $\Gamma$. Its shape essentially depends 
on the fact that $I\cap J$ is empty or not. 

\subsection{Persistence with "full support"}

In this subsection, we assume that $I\cap J$ is not empty. According to 
Lemma~\ref{lem:parameters}, the vector fields $F_{\mathcal{E}_0}$ and $F_{\mathcal{E}_1}$
are such that \eqref{eq:parameters} holds.
Let us denote by $\Sigma_i$ the  intersection of $[0,\infty)^2$ and the
unstable manifold of $(0,1/d_i)$ and $\Gamma'$ the bounded subset of
$[0,+\infty)^2$ with border 
\[
\Sigma_1\cup\BRA{(x,0)\,:\,1/a_1\leq x\leq 1/a_0}
\cup\Sigma_0\cup\BRA{(0,y)\,:\,1/d_1\leq x\leq 1/d_0}.
\]

\begin{thm}\label{thm:support}
Suppose that $I\cap J\neq \emptyset$. Then, for any $(s,t)\in [0,1]\times
(0,\infty)$ such that $\Lambda_\emph{\textbf{x}}(s,t)>0$ and
$\Lambda_\emph{\textbf{y}}(s,t)>0$, then $\Gamma'=\Gamma$. 
\end{thm}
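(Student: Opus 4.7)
The natural strategy is to prove the two inclusions $\Gamma\subseteq\Gamma'$ and $\Gamma'\subseteq\Gamma$ separately: the first by showing $\Gamma'$ is positively invariant under both switching flows, the second by an accessibility/controllability argument.

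For $\Gamma\subseteq\Gamma'$, I would verify that on each piece of $\partial\Gamma'$ either one of the vector fields is tangent or both point into $\Gamma'$. On the axis segments this is direct: on $[1/a_1,1/a_0]\times\{0\}$ one has $\dot x=\alpha_i x(1-a_i x)$, and the ordering $a_0<a_1$ forces $F_{\cE_0}$ to push toward $(1/a_0,0)$ and $F_{\cE_1}$ toward $(1/a_1,0)$, both within the segment; the $y$-axis piece is symmetric via $d_0<d_1$. On $\Sigma_0$ the field $F_{\cE_0}$ is tangent by construction, so the crux is to show $F_{\cE_1}$ points strictly toward the origin. Here the ordering~\eqref{eq:parameters} is used sharply: on $\Sigma_0$ one has $a_0x+b_0y<1<c_0x+d_0y$, and the inequalities $c_0<a_1$, $d_0<b_1$ give $a_1x+b_1y>c_0x+d_0y>1$, while $c_0<c_1$, $d_0<d_1$ give $c_1x+d_1y>1$. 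Hence both components of $F_{\cE_1}$ are strictly negative on $\Sigma_0$, so $F_{\cE_1}$ points into $\Gamma'$. A dual calculation shows that on $\Sigma_1$ one has $a_0x+b_0y<a_1x+b_1y<1$ and $c_0x+d_0y<a_1x+b_1y<1$, so both components of $F_{\cE_0}$ are strictly positive on $\Sigma_1$ and $F_{\cE_0}$ also points into $\Gamma'$. Positive invariance of $\Gamma'$ under both flows follows, and $\Gamma\subseteq\Gamma'$ is then standard.

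For the reverse inclusion I would invoke the accessibility characterization of $\Gamma$ used in~\cite{lotka}: the closure of the accessibility set from any point of $\Gamma$ coincides with $\Gamma$. It therefore suffices to produce some $p^\ast\in\mathrm{int}(\Gamma')$ whose accessibility set is dense in $\Gamma'$. Since $F_{\cE_0}$ and $F_{\cE_1}$ are analytic and linearly independent outside a negligible subset of $\mathrm{int}(\Gamma')$, a classical Chow-type argument gives openness of the accessibility set. To extend it to the whole interior I would use that a long $F_{\cE_0}$-trajectory starting sufficiently close to the saddle $(0,1/d_0)$ shadows $\Sigma_0$ arbitrarily closely, and reaches any prescribed neighborhood of any point of $\Sigma_0$ before converging to $(1/a_0,0)$; the analogous statement for $F_{\cE_1}$ near $(0,1/d_1)$ covers $\Sigma_1$. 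Combined with the positive invariance from Step~1 and the connectedness of $\mathrm{int}(\Gamma')$, this yields density of the accessibility set in $\Gamma'$, whence $\Gamma'\subseteq\Gamma$ upon taking closures.

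The main obstacle is the geometric invariance step on $\Sigma_0$ and $\Sigma_1$: verifying that each separatrix lies entirely on the correct side of both isoclines of the \emph{other} vector field. The isocline comparison sketched above reduces this to linear inequalities that are exactly the content of~\eqref{eq:parameters}; once that is made precise, everything else is either routine computation or follows from standard accessibility machinery for PDMPs. This also clarifies why $I\cap J\neq\emptyset$ (equivalently,~\eqref{eq:parameters}) is the natural hypothesis for such a clean two-curve description of the support.
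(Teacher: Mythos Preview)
Your argument for the inclusion $\Gamma\subseteq\Gamma'$ (positive invariance of $\Gamma'$) is correct and in fact more detailed than the paper's, which simply asserts that both fields point into $\Gamma'$ along the boundary. The isocline comparison you give is exactly the right computation; the one cosmetic gap is the step ``both components of $F_{\cE_1}$ negative on $\Sigma_0$ $\Rightarrow$ $F_{\cE_1}$ points into $\Gamma'$'', which should really be phrased as a sign check on $\det(F_{\cE_0},F_{\cE_1})$ along $\Sigma_0$ (your sign information gives this immediately).

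The reverse inclusion $\Gamma'\subseteq\Gamma$, however, has a genuine hole. Your plan is to show that the forward accessibility set from an interior point is dense in $\Gamma'$ by (i) Chow/open-ness and (ii) shadowing $\Sigma_0$ via long $F_{\cE_0}$-orbits started ``sufficiently close to the saddle $(0,1/d_0)$'' (and similarly for $\Sigma_1$). But you never explain how the switched process can \emph{reach} a neighbourhood of $(0,1/d_0)$ or $(0,1/d_1)$ in the first place. Both $F_{\cE_0}$ and $F_{\cE_1}$ are Type~1: from any interior point each flow drives $y$ down and carries the trajectory toward the $x$-axis sinks $(1/a_i,0)$, away from the $y$-axis. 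A bang--bang concatenation of these two flows, or an open-accessibility/connectedness argument, does not by itself produce points with $x$ small; and your final sentence (``combined with positive invariance and connectedness\dots'') is not a proof of density.

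This is exactly the place where the paper uses $I\cap J\neq\emptyset$ \emph{dynamically}, not merely through the inequalities~\eqref{eq:parameters}. Picking $s\in I\cap J$ makes $\cE_s$ of Type~2: the point $(1/a_s,0)$ becomes a saddle and its unstable manifold $\Sigma_s$ runs from the $x$-axis all the way to the attractor $(0,1/d_s)$ on the $y$-axis. Since the support already contains a half-disc around $(1/a_s,0)$ (both extremal flows land on $[1/a_1,1/a_0]\times\{0\}$), and since the averaged flow $F_{\cE_s}$ is realisable as a limit of fast switching, the whole curve $\Sigma_s$ lies in $\Gamma$; it is this curve that bridges the $x$-axis side of $\Gamma'$ to the $y$-axis side. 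After that, every point of $\Gamma'$ is seen to lie on a forward $\cE_0$- or $\cE_1$-orbit of some point of $\Sigma_s$. Your outline is missing precisely this bridge, and without it the shadowing argument for $\Sigma_0$, $\Sigma_1$ cannot get started.
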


\begin{figure}
  {
  \centering
  \newcommand{\ns}{}
\newcommand{\stablePoint}[1]{%
  \fill (\ns-#1) circle (2pt);
}
\newcommand{\unstablePoint}[1]{%
  \draw[fill=white] (\ns-#1) circle (2pt);
}
\newcommand{\hyperbolicPoint}[1]{%
  \draw[thick] (\ns-#1) +(2pt,2pt) -- +(-2pt,-2pt);
  \draw[thick] (\ns-#1) +(-2pt,2pt) -- +(2pt,-2pt);
}
\newcommand{\drawAxes}{%
  \begin{scope}[help lines,->]
    \draw (0,0) -- (5.3,0);
    \draw (0,0) -- (0,4);
  \end{scope}
}
\newcommand{\setNodeCoordinates}[5]{%
  \begin{scope}[label distance=5pt]
  \coordinate (\ns-O) at (0,0)    {};
  \coordinate[label=below:$A_{#5}$] (\ns-A) at (1/#1,0) {};
  \coordinate[label=below:$C_{#5}$] (\ns-C) at (1/#3,0) {};
  \coordinate[label=left:$B_{#5}$] (\ns-B) at (0,1/#2) {};
  \coordinate[label=left:$D_{#5}$] (\ns-D) at (0,1/#4) {};
  \end{scope}
}
\newcommand{\setIntersectionCoordinates}{%
  \path[name path=horizontal] (\ns-A)--(\ns-B);
  \path[name path=vertical]   (\ns-C)--(\ns-D);
  \path[name intersections={of=horizontal and vertical,by={\ns-I}}] ;
}
\newcommand{\drawIsoclines}{
  \begin{colormixin}{30!white} 
   \draw
    (\ns-A)--(\ns-B)
    (\ns-C)--(\ns-D);
  \end{colormixin}
}
\tikzset{
  set arrow inside/.code={\pgfqkeys{/tikz/arrow inside}{#1}},
  set arrow inside={end/.initial=>, opt/.initial=},
  /pgf/decoration/Mark/.style={
    mark/.expanded=at position #1 with
    {
        \noexpand\arrow[\pgfkeysvalueof{/tikz/arrow inside/opt}]{\pgfkeysvalueof{/tikz/arrow inside/end}}
    }
  },
  arrow inside/.style 2 args={
    set arrow inside={#1},
    postaction={
      decorate,decoration={
        markings,Mark/.list={#2}
      }
    }
  },
}

\begin{tikzpicture}[x=10cm,y=12cm]
  \clip (-0.1,-0.1) rectangle (1.2,0.5);
  \begin{scope}[help lines,->]
    \draw (0,0) -- (2,0);
    \draw (0,0) -- (0,0.4);
  \end{scope}
  \setNodeCoordinates{1}{1}{2}{2.3}{0}
  \begin{scope}[red]
    \draw[thick] plot[smooth] file{mnf_0.txt}%
      [arrow inside={end=To[],opt={scale=1}}{0.25,0.5,0.75}];
    \drawIsoclines
    \hyperbolicPoint{D}
    \stablePoint{A}
  \end{scope}
  \setNodeCoordinates{3}{3.2}{4}{3.9}{1}
  \begin{scope}[blue]
    \draw[thick] plot[smooth] file{mnf_1.txt}%
      [arrow inside={end=To[],opt={scale=1}}{0.25,0.5,0.75}];
    \drawIsoclines
    \hyperbolicPoint{D}
    \stablePoint{A}
  \end{scope}
  \setNodeCoordinates{2.6}{2.7}{2.3}{2.5}{s}
  \draw[thick, red!50!blue] plot[smooth] file{mnf_s.txt}%
      [arrow inside={end=To[],opt={scale=1}}{0.25,0.5,0.75}];
  \begin{scope}[red!50!blue]
    \drawIsoclines
    \hyperbolicPoint{A}
    \stablePoint{D}
  \end{scope}
  \unstablePoint{O}
\end{tikzpicture}
  \par
}
\small
The isoclines (straight lines) and unstable manifolds (curved lines)
for the three environments $\cE_0$ (bottom left, in blue), $\cE_s$ (middle,
in purple) and $\cE_1$ (upper right, in red). Note how the isoclines 
are "swapped" for $\cE_s$, a Type 2 environment. 
\caption{Full support case: isoclines and unstable manifolds}
  \label{fig:IinterJ_non_vide}
\end{figure}

\begin{figure}
  {\centering
    \includegraphics[width=10cm]{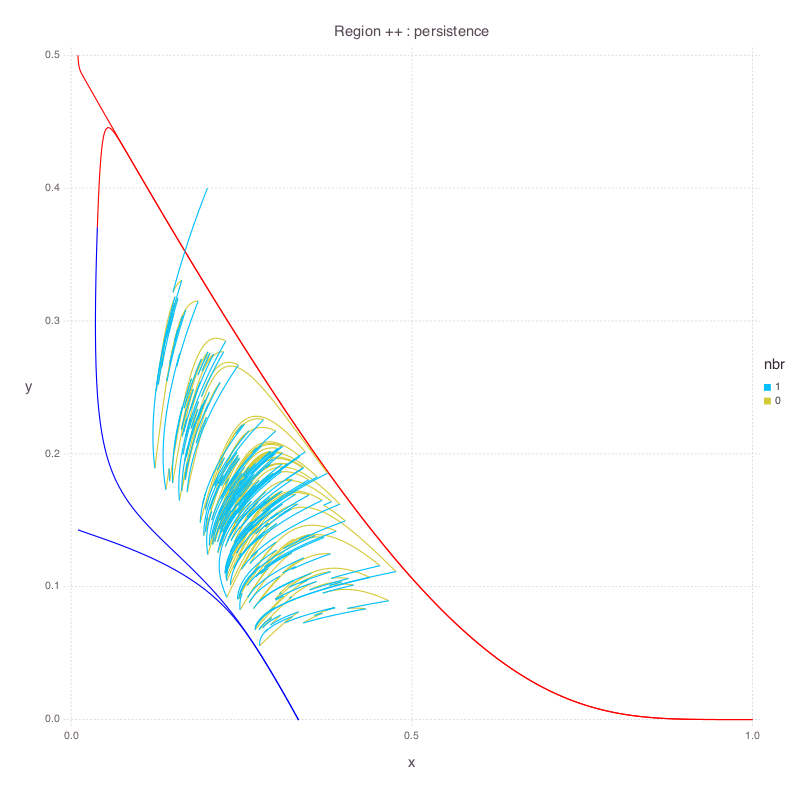}
   \par
  }

  \small
  The outer curves are $\Sigma_0$ and $\Sigma_1$. The region 
  between these curves is positively invariant. The
  inner curves are the two trajectories coming from the 
  unique point $z\in T$: they form the boundary of the support. 
  The sample trajectory shows that the invariant measure is 
  in practice often concentrated on a smaller subset. 
   \caption{Support away from the $y$ axis}
   \label{fig:away}
\end{figure}

\begin{proof}
Firstly, notice that the set $\Gamma'$ is positively invariant for each flow 
since both vector fields $F_{\mathcal{E}_0}$ and $F_{\mathcal{E}_1}$ 
point inside $\Gamma'$. 

Pick an $s\in I\cap J$. The isoclines and the unstable manifold of the 
saddle point for the three environments $\cE_0$, 
$\cE_1$ and $\cE_s$ are necessarily in the position depicted in
Figure~\ref{fig:IinterJ_non_vide}. Denote by $\Sigma_s$ the 
intersection of the unstable manifold of $(1/a_s,0)$ with the
upper right quadrant. 

First step: the set $\Sigma_s$ is contained in the support. 
Indeed, pick a point $(x,y)$ in the interior of the support (such
a point exists by \cite[Remark 6]{lotka}). The loop formed 
by the trajectories starting from $(x,y)$ with both flows (converging
to $A_1:(1/a_1,0)$ and $A_0:(1/a_0,0)$ and the line segment $[A_0,A_1]$
is included in the support (by positive invariance). As a consequence, 
the support must contain a closed half ball centered on $A_s$ --- 
let us call it $\cB$. Now pick a point $(x,y)\in \Sigma_s$: by definition its $\cE_s$ 
flow converges for $t\to-\infty$ to $1/a_s$. By continuity there exists a 
point in the past of $(x,y)$ which is in $\cB$. Running the time forward again, 
the point $(x,y)$ must be in the support. 

Second step: any point (strictly) between $\Sigma_1$ and $\Sigma_s$ is in $\Gamma$. 
Starting from such a point $(x,y)$, run the $\cE_1$ flow in reverse time. 
The trajectory must cross $\Sigma_s$. So $(x,y)$ is in the 
future of a point in $\Sigma_s\subset\Gamma$, and $(x,y)\in\Gamma$ by positive invariance. 

Third step: any point between $\Sigma_0$ and $\Sigma_s$ is in $\Gamma$. 
This step is similar to the previous one and is omitted. 

Similarly, any point between $\Sigma_1$ and $\Sigma_s$ is in $\Gamma$.
\end{proof}

\subsection{Support away from the $y$ axis}

We suppose in the sequel that $I\cap J$ is empty. Let us introduce the 
set where the two vector fields $F_0$ and $F_1$ are collinear:
\[
C=\BRA{z\in\dR_+^2\, : \, \det(F_0(z),F_1(z))=0}.
\]
This set is the union of $\BRA{(0,y)\,:\, y\geq 0}$, $\BRA{(x,0)\,:\, x\geq 0}$, and 
\[
\tilde C=\BRA{(x,y)\in\dR_+^2\, : \,G(x,y)=0}
\]
where $G$ is a polynomial of degree 2. As a consequence, the set~$\tilde C$ is 
a subset of a conic. It is easy to see that $\tilde{C}$ is also the 
set of non-degenerate equilibrium points for the vector field $F_{\cE_s}$, as
$s$ varies from $0$ to $1$. When $s\in I$, $\cE_s$ is of Type~$3$ so the
equilibrium point is stable and globally attractive. Therefore the 
part of $\tilde{C}$ that corresponds to $s\in I$ must be included in $\Gamma$, as
well as all trajectories (for both flows) starting from it. 

Numerical experiments suggest that there is a unique "extremal point" 
on this part of $\tilde{C}$, such that the trajectories starting from 
this point form the boundary of $\Gamma$. See Figure~\ref{fig:away}. 

To describe it more precisely, consider the subset of $\tilde C$ made of the points 
where $F_0$ (or $F_1$) is tangent to the curve $\tilde C$. This set is given by 
\[
T=\BRA{(x,y)\in \dR_+^2\,:\, G(x,y)=0\text{ and } (F_0\cdot \nabla G)(x,y)=0}.
\]
Since $G$ and $F_0\cdot\nabla G$ are polynomials with respective degrees 2 and 3, $T$ 
is made of at most six points according to Bezout's Theorem. 


For any $z\in T$ let us define $C(z)$ the bounded region enclosed
by the Jordan curve
\[
\BRA{\varphi^{0}_z(t)\, :\, t\in [0,\infty)}\cup \BRA{\varphi^{1}_z(t)\, :\, t\in [0,\infty)} 
\cup [1/a_1,1/a_0]\times \BRA{0}, 
\]
where $t\mapsto \varphi^{i}_z(t)$ is the flow associated to the vector field $F_i$ for $i=0,1$.    

\begin{conj}
The set $T$ is a singleton $\BRA{z_0}$ and the support of the invariant measure 
which is not supported by one of the two axes is $C(z_0)\times\BRA{0,1}$. 
\end{conj}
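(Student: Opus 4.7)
The plan is to use the control-theoretic description of the support of an invariant measure for a PDMP: $\Gamma$ is the closure of the set of points accessible from any given recurrent point through finite concatenations of pieces of the flows $\varphi^0$ and $\varphi^1$, and it is positively invariant under both flows. In view of the assumption $I\cap J=\emptyset$ combined with $a_0<c_0<a_1<c_1$, neither of the averaged environments $\cE_s$ is of Type~4, and one expects $\Gamma$ to be a ``pocket'' bounded below by an arc of the $x$-axis and bounded above by the concatenation of a forward $F_{\cE_0}$-trajectory and a forward $F_{\cE_1}$-trajectory that meet smoothly at a single point.

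The first step is to show that if $\partial\Gamma$ contains an arc of an $F_i$-trajectory meeting an arc of an $F_{1-i}$-trajectory at a point $z$ in the open quadrant, then $z\in T$. Indeed, at such a junction the two arcs must share their tangent direction (otherwise one of the flows would push outside $\Gamma$), forcing $F_0(z)$ and $F_1(z)$ to be collinear, i.e.\ $z\in\tilde{C}$; and this common direction must be tangent to $\partial\Gamma$, hence tangent to the locus $\tilde{C}$ itself (since the boundary is pinched against $\tilde{C}$), which is precisely the condition $(F_0\cdot\nabla G)(z)=0$. Once this is established, a direct check shows that for any $z\in T$ the region $C(z)$ is positively invariant under both flows: on the trajectory pieces the invariance is automatic, and on the axis segment $[1/a_1,1/a_0]\times\{0\}$ both vector fields point inside $C(z)$ by the sign analysis of the isoclines.

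The second, and hardest, step is to establish that $T$ is a singleton. Bezout only gives $\#T\leq 6$, so a finer analysis of the geometry is needed. My plan is to parametrize the relevant portion of $\tilde{C}$ rationally using $s\mapsto(\bar x_s,\bar y_s)=(a_sd_s-b_sc_s)^{-1}(a_s-c_s,b_s-d_s)$ (valid on the arc of $\tilde{C}$ corresponding to Type~3 environments, i.e.\ $s\in I$), and to rewrite the tangency equation $F_0\cdot\nabla G=0$ as a scalar equation in $s$. The function so obtained should change sign exactly once on $I$: this reflects the geometric fact that as $s$ moves across $I$, the equilibrium $(\bar x_s,\bar y_s)$ travels monotonically along $\tilde{C}$ from the neighborhood of one Type~1/2 transition to the other, while the slope of $F_0$ rotates monotonically relative to the tangent of $\tilde{C}$. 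The main obstacle is precisely here: to show that the remaining Bezout intersections of $\{G=0\}$ with $\{F_0\cdot\nabla G=0\}$ lie outside the positive quadrant or outside the Type~3 arc, leveraging the ordering $a_0<c_0<a_1<c_1$, $b_0<d_0$, $b_1<d_1$ imposed in Lemma~\ref{lem:parameters}'s analog for this case. I expect this to require an explicit computation of $G$ and $F_0\cdot\nabla G$ together with a careful sign analysis on each axis and at infinity.

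The final step is to prove both inclusions $C(z_0)\subseteq\Gamma\subseteq C(z_0)$. For the first, I would use that the Type~3 arc of $\tilde{C}$ consists of globally attracting equilibria of the $F_{\cE_s}$, so starting near such an equilibrium and concatenating $F_0$ and $F_1$ pieces one can access a neighborhood of every point of the interior of $C(z_0)$; combining with the accessibility/openness results already used in \cite{lotka} yields $C(z_0)\subseteq\Gamma$. For the reverse inclusion, positive invariance of $C(z_0)$ under both flows implies that any recurrent point of the PDMP that enters $C(z_0)$ stays there, and then the characterization of interior boundary junctions as elements of $T=\{z_0\}$ forces $\partial\Gamma\cap\{y>0\}$ to coincide with $\partial C(z_0)\cap\{y>0\}$, giving $\Gamma=C(z_0)$.
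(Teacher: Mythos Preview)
The statement you are attempting to prove is labelled a \emph{Conjecture} in the paper, and the paper provides no proof of it. The authors only motivate it by numerical experiments (Figure~\ref{fig:away}) and by the Bezout bound $\#T\leq 6$; they explicitly leave it open. There is therefore no ``paper's own proof'' against which to compare your proposal.

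Your outline is a reasonable strategy, but it is not a proof, and you correctly identify the essential difficulty yourself. A few comments on the gaps. In your first step, the claim that a boundary junction point $z$ must satisfy $(F_0\cdot\nabla G)(z)=0$ is not justified by the argument you give: the phrase ``the boundary is pinched against $\tilde C$'' does not explain why the common flow direction at $z$ should be tangent to $\tilde C$. What is actually needed is a local analysis of how the sign of $\det(F_0,F_1)$ changes as one moves along the boundary across $z$, combined with the requirement that on each side the other vector field points inward; this forces the flow to be tangent to the zero set of $G$ at the switch, but you have not supplied that argument. In your second step, parametrizing the Type~3 arc of $\tilde C$ by $s\in I$ and reducing to a scalar sign change is a natural idea, but the assertion that the resulting function changes sign exactly once is precisely the content of the conjecture and you give no mechanism for proving it beyond a hoped-for monotonicity; the explicit sign analysis you allude to is the whole problem. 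Finally, in the third step the inclusion $\Gamma\subseteq C(z_0)$ does not follow merely from positive invariance of $C(z_0)$ together with $T=\{z_0\}$: you would also need to know that some recurrent point lies in $C(z_0)$ and that $\Gamma$ is connected, and to rule out boundary arcs of $\Gamma$ that are entire $F_i$-trajectories without any switch.
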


%

\paragraph*{Acknowledgements.}
We thank an anonymous referee for constructive remarks.
We acknowledge financial support from the French ANR project ANR-12-JS01-0006-PIECE.
Numerical computations were done in Julia and graphics in TikZ.

\bibliography{biblio}

\begin{thebibliography}{1}

\bibitem{BLMZ1}
M.~Benaïm, S.~Le~Borgne, F.~Malrieu, and P.-A. Zitt.
\newblock On the stability of planar randomly switched systems.
\newblock {\em Ann. Appl. Probab.}, 24(1):292--311, 2014.

\bibitem{lotka}
M.~Benaïm and C.~Lobry.
\newblock Lotka-{V}olterra with randomly fluctuating environments or "how
  switching between beneficial environments can make survival harder".
\newblock {\em Ann. Appl. Probab.}, 26(6):3754--3785, 2016.

\bibitem{mattingly}
S.~D. Lawley, J.~C. Mattingly, and M.~C. Reed.
\newblock Sensitivity to switching rates in stochastically switched {ODE}s.
\newblock {\em Commun. Math. Sci.}, 12(7):1343--1352, 2014.

\bibitem{MOA11}
A.~W. Marshall, I.~Olkin, and B.~C. Arnold.
\newblock {\em Inequalities: theory of majorization and its applications}.
\newblock Springer Series in Statistics. Springer, New York, second edition,
  2011.

\bibitem{SS07}
M.~Shaked and J.~G. Shanthikumar.
\newblock {\em Stochastic orders}.
\newblock Springer Series in Statistics. Springer, New York, 2007.

\end{thebibliography}
\bibliographystyle{abbrv.bst}

{\footnotesize

\noindent Florent \textsc{Malrieu}, 
e-mail: \texttt{florent.malrieu(AT)univ-tours.fr}

\medskip

\noindent\textsc{Laboratoire de Math\'ematiques et Physique Th\'eorique 
(UMR CNRS 7350), F\'ed\'eration Denis Poisson (FR CNRS 2964), Universit\'e 
Fran\c cois-Rabelais, Parc de Grandmont, 37200 Tours, France.}

\bigskip

   \noindent Pierre-Andr\'e~\textsc{Zitt},
e-mail: \texttt{pierre-andre.zitt(AT)u-pem.fr}

 \medskip

 \noindent\textsc{Laboratoire d'Analyse et Math\'ematiques Appliqu\'ees (UMR CNRS 8050), Universit\'e-Paris-Est-Marne-La-Vall\'ee, 
 5, boulevard Descartes,
 Cité Descartes, Champs-sur-Marne,
 77454 Marne-la-Vallée Cedex 2, France.}

}

\end{document}